\documentclass[12pt, reqno]{amsart}
\usepackage{amsmath,verbatim,amsthm,amssymb,accents,tensor,slashed}
\usepackage{tikz-cd}
\usepackage[all]{xy}
\usepackage{comment}
\setlength{\textheight}{9.2in} 
\setlength{\textwidth}{5.8in} 
\newtheorem{theorem}{Theorem}[section]
\newtheorem{proposition}[theorem]{Proposition}

\newtheorem{corollary}[theorem]{Corollary} 
 
\newtheorem{lemma}[theorem]{Lemma} 

\theoremstyle{definition} 
\numberwithin{equation}{section}
\renewcommand\({\Big(} 
\renewcommand\){\Big)} 
\newcommand\<{\langle} 
\renewcommand\>{\rangle}

\newcommand\er{\eqref}
\newcommand\be[1]{\begin{equation}\label{#1}}
\newcommand\ee{\end{equation}} 
\newcommand\bm{\begin{pmatrix}}
\renewcommand\em{\end{pmatrix}}
\newcommand\bd{\begin{vmatrix}}
\newcommand\ed{\end{vmatrix}}

\newcommand\F[1]{{\sqrt{#1}}}

\newcommand\I{\int\limits}
\renewcommand\S{\sum\limits}
\renewcommand\P{\prod\limits}
\newcommand\U{\bigcup\limits}
\newcommand\C{\bigcap\limits}

\newcommand\dl{\partial}

\newcommand\op{\oplus}

\newcommand\oc{\circ}

\newcommand\iu{\cup}
\newcommand\ui{\cap}
\newcommand\ic{\subset}

\newcommand\xx{\times}

\newcommand\ap{\approx}
\newcommand\qi{\mapsto}

\renewcommand\le{\leqslant}
\renewcommand\ge{\geqslant}
\newcommand\OO{\emptyset}
\newcommand\sm{\setminus}

\renewcommand\#{\sharp}
\renewcommand\.{\cdot}
\renewcommand\:{\cdots}
\renewcommand\;{\ldots}
\newcommand\la{\alpha}
\newcommand\lb{\beta}

\newcommand\lc{\chi}
\renewcommand\lg{\gamma}
\newcommand\lG{\Gamma}

\newcommand\lD{\Delta}
\newcommand\Le{\varepsilon}

\newcommand\lk{\kappa}
\renewcommand\ll{\lambda}
\newcommand\lL{\Lambda}
\newcommand\lm{\mu}
\renewcommand{\ln}{\nu}

\newcommand\lf{\phi}

\renewcommand\lq{\psi}

\newcommand\lp{\pi}

\newcommand\lr{\varrho}
\newcommand\ls{\sigma}
\newcommand\lS{\Sigma}

\newcommand\lT{\Theta}
\newcommand\Lt{\tau}
\newcommand\lo{\omega}
\newcommand\lO{\Omega}
\newcommand\lx{\xi}

\newcommand\lz{\zeta}

\newcommand\EL{\mathcal E}

\newcommand\KL{\mathcal K} 
\newcommand\LL{\mathcal L} 
\newcommand\ML{\mathcal M}

\newcommand\PL{\mathcal P}

\newcommand\TL{\mathcal T} 
 
\newcommand\VL{\mathcal V}

\newcommand\Cl{{\mathbf C}}

\newcommand\Nl{{\mathbf N}}
 
\newcommand\Pl{{\mathbf P}}

\newcommand\Tl{{\mathbf T}}

\newcommand\q{\quad} 
\renewcommand\l{\ell}
\newcommand\m{{\boldsymbol m}} 
\newcommand\n{{\boldsymbol n}}

\renewcommand\c[1]{{\check{#1}}}

\newcommand\f[2]{{\frac{#1}{#2}}}
 
\newcommand\h[1]{{\widehat{#1}}} 
\renewcommand\o[1]{{\overline{#1}}}

\renewcommand\t[1]{{\widetilde{#1}}} 

\newcommand\w[1]{{\widetilde{#1}}} 
\newcommand\x[1]{{\mathrm{#1}}} 
\newcommand\y[1]{{\undertilde{#1}}}

\renewcommand\b{{\mathrel{\scalebox{0.8}{$\Box$}}}}

\newcommand\J[1]{\text{\reflectbox{${#1}$}}}
\newcommand\D[1]{{\slashed{#1}}}

\renewcommand\ni{\J{\in}}

\DeclareFontFamily{U}{mathb}{\hyphenchar\font45}
\DeclareFontShape{U}{mathb}{m}{n}{
 <5><6><7><8><9><10> gen * mathb
 <10.95> mathb10
}{}
\DeclareSymbolFont{mathb}{U}{mathb}{m}{n}
\DeclareMathSymbol{\VDash}3{mathb}{"28}
\begin{document}
\setcounter{section}{-1}

\title{Stratified Hilbert Modules on Bounded Symmetric Domains}
\author{Harald Upmeier} 
\medskip

\address{Fachbereich Mathematik, Universit\"at Marburg, D-35032 Marburg, Germany}
\email{upmeier{@}mathematik.uni-marburg.de}

\dedicatory{Dedicated to the Memory of J\"org Eschmeier}

\subjclass{Primary 32M15, 46E22; Secondary 14M12, 17C36, 47B35}

\keywords{bounded symmetric domain, Hilbert module, complex-analytic fibre space, flag manifold, Jordan triple}

\begin{abstract} We analyze the "eigenbundle" (localization bundle) of certain Hilbert modules over bounded symmetric domains of rank $r,$ giving rise to complex-analytic fibre spaces which are stratified of length $r+1.$ The fibres are described in terms of K\"ahler geometry as line bundle sections over flag manifolds, and the metric embedding is determined by taking derivatives of reproducing kernel functions. Important examples are the determinantal ideals defined by vanishing conditions along the various strata of the stratification. 
\end{abstract}

\maketitle

\section{Introduction}
J\"org Eschmeier was a master in the interplay between operator theory and multi-variable complex analysis, in particular in its modern sheaf-theoretic form. An interesting concept in this respect is the "eigenbundle" of a Hilbert module arising from a commuting tuple of non-selfadjoint operators $T_1,\;,T_d.$ Just as the spectral theorem is the basic tool for the analysis of self-adjoint operators, the eigenbundle plays a similar role in the non-selfadjoint case, naturally involving multi-variable complex analysis instead of "real" measure theory. 

In the original approach by Cowen-Douglas \cite{CD} for Hilbert modules $H$ of holomorphic functions on a bounded domain 
$D\ic\Cl^d,$ the eigenbundle, denoted by $H_D,$ is a genuine holomorphic vector bundle on $D$ whose hermitian geometry determines the underlying operator tuple. In more general situations \cite{DMV,DG} the eigenbundle will have singularities along certain subvarieties of $D,$ making the geometric aspects more complicated. For example, if $H=\o I$ is the Hilbert closure of a prime ideal $I$ of polynomials, whose vanishing locus $X$ is smooth, then by a result of Duan-Guo \cite{DG} the eigenbundle 
$H_D$ has rank 1 on the regular set $D\sm X,$ whereas $H_X:=H_D|_X$ is isomorphic to the (dual) normal bundle of $X.$ Thus we have a "stratification" of length 2. In this paper, we study $K$-invariant Hilbert modules $H$ over bounded symmetric domains $D=G/K$ of rank $r,$ leading to singular vector bundles which are stratified of length $r+1.$ This study was initiated in \cite{U3} where the eigenbundle of certain polynomial ideals $J^\ll,$ for a given partition $\ll$ of length $\le r,$ was determined explicitly. These ideals are not prime ideals except for the "fundamental" partitions $\ll=(1,\;,1,0,\;,0).$ The main result of \cite{U3} describes the fibres of the eigenbundle using representation theory of the compact Lie group $K.$

The current paper extends and generalizes this analysis. For the partition ideals $J^\ll$ and Hilbert closures $H=\o J^\ll$ we construct a Hilbert space embedding of the eigenbundle by taking certain derivatives of the reproducing kernel function of $H.$ This is important to study the hermitian structure and is related to the "jet construction" introduced in \cite{DMV}. Moreover, we give a holomorphic characterization of the eigenbundle in terms of holomorphic sections of line bundles over a flag manifold. Such a geometric characterization may also hold in more general situations. Beyond the setting of the partition ideals we consider arbitrary $K$-invariant polynomial ideals, in particular the so-called "determinantal" ideals which have a direct geometric meaning. 
	
\section{Hilbert Modules and their Eigenbundle}	
Let $D$ be a bounded domain in a finite dimensional complex vector space $E\ap\Cl^d.$ Denote by $\PL_E\ap\Cl[z_1,\;,z_d]$ the algebra of all polynomials on $E.$ A Hilbert space $H$ of holomorphic functions $f$ on $D$ (supposed to be scalar-valued) is called a {\bf Hilbert module} if for any polynomial $p\in\PL_E$ the multiplication operator $T_pf:=pf$ leaves $H$ invariant and is bounded. Using the adjoint operators $T_p^*,$ the closed linear subspace
\be{1}H_\lz:=\{f\in H:\ T_p^*f=\o{p(\lz)}f\ \forall\ p\in\PL_E\}\ee
is called the {\bf joint eigenspace} at $\lz\in D.$ Since $T_pT_q=T_{pq}$ for polynomials $p,q$ it suffices to consider linear functionals or just the coordinate functions. The disjoint union
$$H_D=\U_{\lz\in D}H_\lz$$
becomes a subbundle of the trivial vector bundle $D\xx H,$ which is called the {\bf eigenbundle} of $H,$ although it is not locally trivial in general. One also requires that the fibres have finite dimension and their union is total in $H.$ The map $\lf\qi[\lf]$ from $H_\lz$ to $H/\o{\ML_\lz H}$ is a Hilbert space isomorphism, with inverse map 
$$H/\o{\ML_\lz H}\to H_\lz,\q f+\o{\ML_\lz H}\qi\lp_\lz f,$$ 
where $\lp_\lz: H\to H_\lz$ is the orthogonal projection. Thus $H_\lz$ becomes the "quotient module" for the submodule 
$\o{\ML_\lz H}.$ 

Classical examples of Hilbert modules are the {\bf Bergman space} $H^2(D)$ of square-integrable holomorphic functions, whose reproducing kernel is called the Bergman kernel, and the {\bf Hardy space} $H^2(\dl D)$ if $D$ has a smooth boundary $\dl D.$ For general Hilbert modules $H$, a {\bf reproducing kernel function} is a sesqui-holomorphic function $\KL(z,\lz)$ on $D\xx D$ such that for each $\lz\in D$ the holomorphic function 
$$\KL_\lz(z):=\KL(z,\lz)$$ 
belongs to $H,$ and we have
$$\lq(z)=(\KL_z|\lq)_ H$$ 
for all $\lq\in  H$ and $z\in D.$ Here $(\lf|\lq)_H$ is the inner product, anti-linear in $\lf.$ Thus $H$ is the closed linear span of the holomorphic functions $\KL_\lz,$ where 
$\lz\in D$ is arbitrary. If $\lf_\la$ is any orthonormal basis of $H$ then
$$\KL(z,\lz)=\S_\la\ \lf_\la(z)\o{\lf_\la(\lz)}.$$ 
For each $\lz\in D$ we have $\KL_\lz\in H_\lz,$ as follows from the identity 
$$(T_p^*\KL_\lz|\lq)_ H=(\KL_\lz|p\lq)_ H=p(\lz)\lq(\lz)=p(\lz)(\KL_\lz|\lq)_ H=(\o{p(\lz)}\KL_\lz|\lq)_ H$$
for $p\in\PL_E$ and $\lq\in  H.$ 

If the reproducing kernel $\KL$ has no zeros (e.g., the Bergman kernel of a strongly pseudo-convex domain) then the eigenbundle 
$H_D$ is spanned by the functions $\KL_\lz$ and hence becomes a {\bf hermitian holomorphic line bundle}. In more general cases the kernel function vanishes along certain analytic subvarieties of $D$ and the eigenbundle is not locally trivial, its fibre dimension can jump along the varieties and we obtain a {\bf singular vector bundle} on $D,$ also called a "linearly fibered complex analytic space" \cite{F}. Such singular vector bundles are important in Several Complex Variables since they are in duality with the category of {\bf coherent analytic module sheaves}, whereas (regular) vector bundles correspond to locally free sheaves. In \cite{BMP} the connection to coherent analytic module sheaves associated with $H$ is made explicit.

An important class of Hilbert modules is given by the Hilbert closure $H=\o I$ of a polynomial ideal $I\ic\PL_E.$ In this case the fibres \er{1} of the eigenbundle have finite dimension. More precisely, by \cite{DG} we have $H_D\ap I_E|_D$ for the 
"localization bundle" $I_E$ over $E,$ with fibre
$$I_\lz:=I/\ML_\lz I$$
at $\lz\in E.$ For any set of generators $p_1,\;,p_t$ of $I$ the linear map
$$\Cl^t\to I_\lz,\ (a_1,\;,a_t)\qi\ML_\lz I+\S_{i=1}^t a_i\ p_i$$
is surjective, showing that $\dim I_\lz\le t.$ In \cite[Lemma 2.3]{BMP} it is shown that $f\in H_\lz$ satisfies
$$\o{p_i(\lz)}f=(p_i|f)_ H\ \KL_\lz$$
for all $i.$ This implies that the eigenbundle $H_D$ restricted to the open dense subset 
$$\c D:=\U_{j=1}^t\{\lz\in D:\ p_j(\lz)\ne 0\}$$
of $D$ is a holomorphic line bundle spanned by the reproducing kernel $\KL_\lz,\ \lz\in\c D.$

The behavior of $H_D$ on the singular set $D\sm\c D$ is more complicated and has so far been studied mostly when the vanishing locus of the reproducing kernel is a smooth subvariety of $D,$ for example given as a complete intersection of a regular sequence of polynomials. The case where $I$ is a {\bf prime ideal} whose vanishing locus $X$ consists of {\bf smooth points} has been studied by Duan-Guo \cite{DG}. They showed that for $\lz\in D\sm X$
$$H_\lz=\<\KL_\lz\>$$
is the 1-dimensional span of the reproducing kernel vector, whereas for $\lz\in X$
$$H_\lz\approx T_\lz^\perp(X)$$
is isomorphic to the normal space (more precisely its linear dual.) Thus we have a {\bf stratification of length 2}. We consider a more complicated situation for bounded symmetric domains $D$ of arbitrary rank $r,$ where we have a {\bf stratification of length $r+1$}, the relevant varieties are not smooth and the ideal $I$ is not prime in general.

\section{$K$-invariant Ideals on Bounded Symmetric Domains}
Let $D=G/K$ be an irreducible bounded symmetric domain of rank $r$, realized as the (spectral) unit ball of a hermitian Jordan triple ($J^*$-triple) $E.$ Let $\{uv^*w\}\in E$ denote the Jordan triple product of $u,v,w\in E.$ The compact Lie group $K$ acts by linear transformations on $E$ preserving the Jordan triple product. For background on the Jordan theoretic approach towards symmetric domains, see \cite{Ar,C,FK2,L2,U1}. Let $\PL_E$ denote the algebra of all polynomials on $E.$ Under the natural action 
$$(k\.f)(z):=f(k^{-1}z)$$
of $k\in K$ on functions $f$ on $E$ the polynomial algebra $\PL_E$ has a {\bf Peter-Weyl decomposition} \cite{S,FK1}
$$\PL_E=\S_{\ll\in\Nl_+^r}\PL_E^\ll$$
into pairwise inequivalent irreducible $K$-modules $\PL_E^\ll.$  Here $\Nl_+^r$ denotes the set of all {\bf partitions}
$$\ll=(\ll_1,\;,\ll_r)$$
of integers $\ll_1\ge\;\ge\ll_r\ge 0.$ The polynomials in $\PL_E^\ll$ are homogeneous of degree $|\ll|:=\ll_1+\;+\ll_r.$ We often identify a partition $\ll$ with its {\bf Young diagram} 
$$[\ll]=\{(i,j):\ 1\le i\le r,\ 1\le j\le\ll_i\}.$$
For fixed $\lm\in\Nl_+^r$ denote by
$$\lp^\lm:\PL_E\to\PL_E^\lm,\q f\qi\lp^\lm f=:f^\lm$$
the $K$-invariant projection onto $\PL_E^\lm.$ As a consequence of Schur orthogonality we have \cite[]{U3}
\be{2}f^\lm(z)=\I_K dk\ \lc_\lm(k)\ f(kz),\ee
where $\lc_\lm$ denotes the character of the $K$-representation on $\PL_E^\lm.$ 

An ideal $I\ic\PL_E$ is called $K$-invariant if $k\.f\in I$ for all $k\in K$ and $f\in I.$ A similar definition applies to Hilbert modules of holomorphic funtions on a $K$-invariant domain. The formula \er{2} implies that a $K$-invariant ideal (resp., Hilbert module) is a direct sum (resp., Hilbert sum) of its Peter-Weyl subspaces $\PL_E^\lm.$ 

For a given partition $\ll$ denote by $J^\ll\ic\PL_E$ the $K$-invariant ideal generated by $\PL_E^\ll.$ The first main result of \cite{U3} asserts that $J^\ll$ has the Peter-Weyl decomposition
\be{3}J^\ll=\S_{\lm\ge\ll}\PL_E^\lm,\ee
where $\ll\le\lm$ means $\ll_i\le\lm_i$ for all $i.$ This is equivalent to the inclusion $[\ll]\ic[\lm]$ for the corresponding Young diagrams. As a consequence, $J^\lm\ic J^\ll$ if and only if $\lm\ge\ll.$ In this section we show that these "partition" ideals $J^\ll$ are fundamental for the study of general $K$-invariant ideals. Given a $K$-invariant ideal $I,$ define 
$I^\#:=\{\ll\in\Nl_+^r:\ J^\ll\ic I\}.$ 

\begin{proposition}\label{a} Let $I\ic\PL_E$ be a $K$-invariant ideal. Then there is a finite set $\lL\ic I^\#$ of partitions such that
$$I=\S_{\ll\in\lL}J^\ll.$$
\end{proposition}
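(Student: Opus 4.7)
The plan is to reduce the statement to two ingredients: the compatibility of the Peter--Weyl decomposition with the ideal structure, and Dickson's lemma applied to the partition lattice.

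First I would use the $K$-invariance of $I$ together with the projection formula \er{2} to show that $I$ inherits a Peter--Weyl decomposition
$$I=\S_{\mu\in S(I)}\PL_E^\mu,\q S(I):=\{\mu\in\Nl_+^r:\PL_E^\mu\ic I\}.$$
Indeed, for $f\in I$ the component $f^\mu=\lp^\mu f$ is, by \er{2}, an integral of $K$-translates of $f$ weighted by $\lc_\mu$, and so lies in $I$; irreducibility of $\PL_E^\mu$ as a $K$-module then forces $\PL_E^\mu\ic I$ whenever $\PL_E^\mu\ui I\ne 0$. Next I would identify $S(I)$ with $I^\#$: the inclusion $I^\#\ic S(I)$ is immediate from \er{3}, while conversely if $\PL_E^\mu\ic I$ then the $K$-invariant ideal $J^\mu$ generated by $\PL_E^\mu$ must lie in $I$, so $\mu\in I^\#$.

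The second step is to observe that $I^\#$ is upward closed in $(\Nl_+^r,\le)$: if $\mu\ge\ll$ and $\ll\in I^\#$, then \er{3} gives $J^\mu\ic J^\ll\ic I$. At this point I invoke Dickson's lemma (the well-quasi-order property of $(\Nl^r,\le)$, which is inherited by the sub-poset of partitions) to conclude that the set $\lL$ of minimal elements of $I^\#$ is \emph{finite}, and every $\mu\in I^\#$ dominates some $\ll\in\lL$. This is the only place where finiteness enters non-formally, and I regard it as the main conceptual point of the proof; everything else is bookkeeping.

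To finish, I would chain the inclusions. Since $\lL\ic I^\#$, each $J^\ll$ with $\ll\in\lL$ lies in $I$, hence $\S_{\ll\in\lL}J^\ll\ic I$. Conversely, for every $\mu\in I^\#$ pick $\ll\in\lL$ with $\ll\le\mu$; then $\PL_E^\mu\ic J^\mu\ic J^\ll$ by \er{3}, and summing over $\mu\in I^\#=S(I)$ yields
$$I=\S_{\mu\in S(I)}\PL_E^\mu\ic\S_{\ll\in\lL}J^\ll.$$
Combining the two inclusions gives the asserted equality $I=\S_{\ll\in\lL}J^\ll$ with $\lL\ic I^\#$ finite.
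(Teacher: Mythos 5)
Your proof is correct, but it reaches finiteness by a different mechanism than the paper. The paper's argument starts from a finite generating set $f_1,\;,f_t$ of $I$ (Hilbert's basis theorem), replaces the generators by their finitely many non-zero Peter--Weyl components $f_s\in\PL_E^{\ll_s}$, and takes $\lL=\{\ll_1,\;,\ll_t\}$; irreducibility of each $\PL_E^{\ll_s}$ gives $J^{\ll_s}\ic I$, and the reverse inclusion is immediate because the generators lie in $\S_s J^{\ll_s}$. You instead decompose $I$ completely as $\S_{\lm\in I^\#}\PL_E^\lm$, note that $I^\#$ is upward closed, and invoke Dickson's lemma to extract the finite set of minimal elements. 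Both routes are sound, and they are cousins (Dickson's lemma is essentially the monomial instance of the Hilbert basis theorem), but yours leans more heavily on the structural result \er{3} from \cite{U3}, whereas the paper's proof needs only that $J^{\ll_s}$ is generated by $\PL_E^{\ll_s}$. In compensation, your argument buys something the paper has to work for afterwards: your $\lL$ is by construction the set of minimal elements of $I^\#$, i.e.\ the unique full and minimal set $I^\#_{min}$ that the paper only obtains through Lemma \ref{b} and the two subsequent propositions, so your single argument effectively also yields Proposition \ref{c}.
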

\begin{proof} If $f\in I$ then \er{2} shows that $f^\lm$ also belongs to $I$ for all $\lm\in\Nl_+^r.$ Let 
$f_1,\;,f_t$ be a finite set of generators of $I.$ Then their non-zero $K$-homogeneous parts belong to $I$ and form a finite set of generators. Thus we may assume that each $f_s\in\PL_E^{\ll_s}$ for some partition $\ll_s.$ We claim that
$$I=\S_{s=1}^t J^{\ll_s}=:J.$$
Since $f_s\in I\ui\PL_E^{\ll_s}$ is non-zero, $I$ is $K$-invariant and $\PL_E^{\ll_s}$ is irreducible, it follows that 
$\PL_E^{\ll_s}\ic I$ and hence $J^{\ll_s}\ic I.$ Thus $J\ic I.$ Conversely, each generator $f_s$ of $I$ belongs to $J.$ Therefore $I\ic J.$
\end{proof}

A subset $\lL\ic I^\#$ is called "full" if
$$I=\S_{\ll\in\lL}J^\ll.$$ 
A subset $A\ic I^\#$ is called "minimal" if $\la\in A$ and $\ll<\la$ implies $\ll\notin I^\#,$ i.e., $J^\ll\D{\ic}I.$ 

\begin{lemma}\label{b} Let $A\ic I^\#$ be minimal and $\lL\ic I^\#$ be full. Then $A\ic\lL.$
\end{lemma}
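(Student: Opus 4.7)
The plan is to exploit the uniqueness of the Peter--Weyl decomposition together with the explicit description \er{3} of $J^\ll$ in terms of Young diagram inclusion. Concretely, I would pick any $\la\in A$ and trace the irreducible $K$-type $\PL_E^\la$ through the two descriptions of $I$: the containment $J^\la\ic I$ coming from $\la\in A\ic I^\#,$ and the sum decomposition $I=\S_{\ll\in\lL}J^\ll$ coming from fullness of $\lL.$

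First, since $\la\in I^\#$ we have $\PL_E^\la\ic J^\la\ic I.$ Fullness gives
$$I=\S_{\ll\in\lL}J^\ll=\S_{\ll\in\lL}\ \S_{\lm\ge\ll}\PL_E^\lm,$$
so the Peter--Weyl decomposition of $I$ consists exactly of those $\PL_E^\lm$ with $\lm\ge\ll$ for some $\ll\in\lL.$ Because the $\PL_E^\lm$ are pairwise inequivalent irreducible $K$-modules, the projection $\lp^\la$ from \er{2} annihilates $\PL_E^\lm$ for $\lm\ne\la,$ and so $\PL_E^\la\ic I$ forces the Peter--Weyl decomposition of $I$ to contain the summand $\PL_E^\la.$ Hence there must exist $\ll\in\lL$ with $\ll\le\la.$

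Second, I would invoke minimality of $A$ at $\la\in A.$ Since $\ll\in\lL\ic I^\#,$ the defining property of minimality forbids $\ll<\la.$ Combined with $\ll\le\la,$ this leaves only the possibility $\ll=\la,$ so $\la\in\lL.$ As $\la\in A$ was arbitrary, $A\ic\lL.$

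The main technical point to get right is the uniqueness assertion of the Peter--Weyl decomposition: one has to be careful that the equality $\PL_E^\la\ic\S_{\ll\in\lL}\S_{\lm\ge\ll}\PL_E^\lm$ genuinely produces a single index $\ll\le\la$ in $\lL,$ rather than a diffuse linear combination across several $K$-types. This is exactly where inequivalence of the $K$-isotypic components $\PL_E^\lm$ (so that the sum in \er{3} is a direct sum) is essential; once this is in place, the rest of the argument is purely combinatorial in the partial order on partitions.
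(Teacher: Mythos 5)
Your proposal is correct and follows essentially the same route as the paper: both arguments rest on the Peter--Weyl description \eqref{3} of $J^\ll,$ apply the isotypic projection $\lp^\la$ to conclude that $\PL_E^\la\ic I=\S_{\ll\in\lL}J^\ll$ forces some $\ll\in\lL$ with $\ll\le\la,$ and then use minimality of $A$ to rule out $\ll<\la.$ The only difference is cosmetic --- you argue directly while the paper phrases the same computation as a proof by contradiction (deducing $p=p^\la=0$ when $\la\notin\lL$).
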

\begin{proof} Suppose there exists $\la\in A$ such that $\la\notin\lL.$ Let $p\in\PL_E^\la.$ Then 
$p\in J^\la\ic I=\S_{\ll\in\lL}J^\ll$ and therefore
$$p=\S_{\ll\in\lL}f_\ll\q\mbox{(finite sum)}$$
for some $f_\ll\in J^\ll.$ By \er{3} we have $f_\ll^\la=0$ unless $\la\ge\ll.$ Since $\la\notin\lL$ this implies
$$p=p^\la=\S_{\ll\in\lL}f_\ll^\la=\S_{\ll\in\lL,\ \la\ge\ll}f_\ll^\la=\S_{\ll\in\lL,\ \la>\ll}f_\ll^\la=0$$
since $\{\ll\in\lL:\ \ll<\la\}=\OO.$ This is a contradiction.
\end{proof}

\begin{corollary} Every minimal set $A\ic I^\#$ is finite.
\end{corollary}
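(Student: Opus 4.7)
The plan is to combine the two preceding results in essentially one line. Proposition \ref{a} produces a \emph{finite} full set $\lL\ic I^\#$, namely $\lL=\{\ll_1,\;,\ll_t\}$ extracted from any finite generating set of $I$ via Peter--Weyl projection. Lemma \ref{b} then says that every minimal set $A\ic I^\#$ is contained in every full set. Taking the full set to be this $\lL$, one immediately concludes $A\ic\lL$, and hence $|A|\le|\lL|<\oo$.

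So the proof proposal is: invoke Proposition \ref{a} to choose a finite full $\lL$; invoke Lemma \ref{b} applied to the given minimal $A$ and this $\lL$ to conclude $A\ic\lL$; deduce finiteness of $A$ from finiteness of $\lL$.

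There is no real obstacle here, since both non-trivial ingredients are already proved. The only thing worth checking is that the finite $\lL$ produced in the proof of Proposition \ref{a} really does lie inside $I^\#$ (so that Lemma \ref{b} is applicable): each $\ll_s$ arises from a non-zero $f_s\in I\ui\PL_E^{\ll_s}$, and $K$-invariance plus irreducibility of $\PL_E^{\ll_s}$ give $\PL_E^{\ll_s}\ic I$, hence $J^{\ll_s}\ic I$, i.e., $\ll_s\in I^\#$. This was already observed inside the proof of Proposition \ref{a}, so the application of Lemma \ref{b} is legitimate.

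A secondary remark worth noting, though not needed for the corollary itself, is that one could alternatively prove finiteness directly from Dickson's lemma: $I^\#$ is an up-set in the componentwise order on $\Nl_+^r$ (by \er{3}), and any antichain in $\Nl_+^r$ is finite; a minimal set is an antichain, so it is finite. But the route through Proposition \ref{a} and Lemma \ref{b} is shorter and uses only what has already been established in the paper.
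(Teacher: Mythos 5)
Your proof is correct and is exactly the paper's argument: apply Proposition \ref{a} to get a finite full set $\lL\ic I^\#$, then Lemma \ref{b} gives $A\ic\lL$, hence $A$ is finite. The extra verification that $\lL\ic I^\#$ and the alternative via Dickson's lemma are fine but not needed.
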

\begin{proof} By Proposition \ref{a} there exists a full set $\lL\ic I^\#$ which is finite. By Lemma \ref{b} we have $A\ic\lL.$ Hence $A$ is finite.
\end{proof}

\begin{proposition} There exists a (finite) set $A\ic I^\#$ which is both full and minimal.
\end{proposition}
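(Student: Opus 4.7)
The plan is to take $A$ to be the set of all minimal elements of $I^\#$ with respect to the coordinate-wise partial order $\le$ on partitions:
$$A:=\{\la\in I^\#:\ \ll<\la\Rightarrow\ll\notin I^\#\}.$$
Minimality of $A$ is immediate from this definition, and finiteness of $A$ follows at once from the preceding corollary. It remains only to verify that $A$ is full.

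I first note that $I^\#$ is upward closed in $(\Nl_+^r,\le):$ if $\ll\in I^\#$ and $\lm\ge\ll,$ then \er{3} gives $J^\lm\ic J^\ll\ic I,$ so $\lm\in I^\#.$ Moreover $I^\#$ itself is full, since $\S_{\ll\in I^\#}J^\ll\ei\S_{\ll\in\lL}J^\ll=I$ for any finite full subset $\lL\ic I^\#$ supplied by Proposition \ref{a}, while the reverse inclusion is immediate from the definition of $I^\#.$ Now $(\Nl_+^r,\le)$ is well-founded (the map $\ll\qi\ll_1+\:+\ll_r$ is strictly monotone into $\Nl$), so every $\ll\in I^\#$ dominates some minimal element $\la\in A.$ By \er{3} this yields $J^\ll\ic J^\la,$ and summing over $\ll\in I^\#$ gives
$$I=\S_{\ll\in I^\#}J^\ll\ic\S_{\la\in A}J^\la\ic I,$$
so $A$ is full.

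No serious obstacle arises. Conceptually, $I^\#$ is an upward-closed subset of the well-founded poset $(\Nl_+^r,\le),$ and $A$ is precisely its set of minimal elements, playing the role of the minimal generators of a monomial ideal; the combinatorial structure of $\le$ forces both fullness and minimality, while finiteness is the content of the preceding corollary.
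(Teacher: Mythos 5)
Your proof is correct, but it takes a genuinely different route from the paper's. The paper argues by an extremal principle: among all finite full subsets $\lL\ic I^\#$ (which exist by Proposition \ref{a}) it selects one minimizing the total weight $|\lL|=\S_{\ll\in\lL}|\ll|$, and then shows a minimizer must be minimal by a swap argument — if some $\la\in A$ strictly dominated a $\ll\in I^\#$, replacing $\la$ by $\ll$ would yield a full finite set of strictly smaller weight. You instead exhibit the candidate explicitly as the set of all minimal elements of the poset $I^\#$, obtaining minimality by definition and finiteness from the preceding corollary, and then verify fullness directly: $I^\#$ is upward closed and full, and well-foundedness of the dominance order (via the same weight function $\ll\qi|\ll|$) guarantees that every $\ll\in I^\#$ dominates some $\la\in A$, whence $J^\ll\ic J^\la$ by \er{3}. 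Your version is somewhat more informative: it identifies $I^\#_{min}$ concretely as the set of minimal elements of $I^\#$, and — combined with Lemma \ref{b} and the observation that any minimal set consists of minimal elements of $I^\#$ — it renders the uniqueness assertion following the proposition immediate. The paper's argument is a bit more economical in that it never needs to invoke the corollary or well-foundedness explicitly, packaging both into the single minimization of $|\lL|$.
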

\begin{proof} For any finite subset $\lL\ic I^\#$ we put
$$|\lL|:=\S_{\ll\in\lL}|\ll|,$$
where $|\ll|:=\ll_1+\;+\ll_r.$ By Proposition \ref{a} there exists a finite subset $\lL\ic I^\#$ which is full. Put
$$k:=\min\{|\lL|:\ \lL\ic I^\#\ \mbox{full and finite}\}.$$
Then there exists a full and finite set $A\ic I^\#$ such that $|A|=k.$ We claim that $A$ is minimal. Suppose there exist 
$\la\in A$ and $\ll\in I^\#$ with $\ll<\la.$ Then $J^\la\ic J^\ll\ic I$ which shows that the finite set 
$\lL=(A\sm\{\la\})\iu\{\ll\}$ is still full. On the other hand, we have $|\ll|<|\la|$ and hence
$$|\lL|=|A\sm\{\la\}|+|\ll|=|A|-|\la|+|\ll|<|A|=k.$$
This contradiction shows that $A$ is minimal.
\end{proof}

The arguments above show that there is a unique finite set $I^\#_{min}\ic I^\#$ which is both full and minimal. We formulate this as

\begin{proposition}\label{c} Let $I\ic\PL_E$ be a $K$-invariant ideal. There exists a unique finite set $\lL\ic\Nl_+^r$ such that
\be{4}I=\S_{\ll\in\lL}J^\ll\ee
and $J^\lm\D{\ic}I$ if $\lm<\ll$ for some $\ll\in\lL.$
\end{proposition}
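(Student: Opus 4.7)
My plan is to observe that Proposition \ref{c} is essentially a reformulation of the chain of results already established, and to carry out the brief bookkeeping that turns the intrinsic properties "full" and "minimal" into the formulation stated here. No new ingredient is needed beyond Lemma \ref{b} and the preceding existence proposition.

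For existence I will invoke the preceding proposition, which produces a finite $A\ic I^\#$ that is both full and minimal. Fullness is exactly the equation \er{4}, and minimality of $A$ as a subset of $I^\#$ says that whenever $\la\in A$ and $\ll<\la$ one has $\ll\notin I^\#$, i.e., $J^\ll\D{\ic}I$. This is precisely the non-inclusion condition of Proposition \ref{c}, so $\lL:=A$ satisfies both requirements.

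For uniqueness, suppose $\lL$ and $\lL'$ are two finite subsets of $\Nl_+^r$ satisfying the conclusion. First I will check that each lies in $I^\#$: for any $\la\in\lL$, fullness \er{4} gives $J^\la\ic\S_{\ll\in\lL}J^\ll=I$, so $\la\in I^\#$, and similarly for $\lL'$. Each of $\lL,\lL'$ is therefore a full and minimal subset of $I^\#$. Applying Lemma \ref{b} with $\lL$ minimal and $\lL'$ full yields $\lL\ic\lL'$, and interchanging the roles of $\lL$ and $\lL'$ gives the opposite inclusion. Hence $\lL=\lL'$, and we may define $I^\#_{\min}:=\lL$.

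The main obstacle is really no obstacle, since all the substantive content has been absorbed into Lemma \ref{b} and the existence proposition preceding the statement. The only point requiring any care is the verification that the non-inclusion condition in Proposition \ref{c}, phrased for arbitrary $\lm<\ll$ without an a priori assumption that $\lm\in I^\#$, is equivalent to minimality of $\lL$ as a subset of $I^\#$; but this is automatic because $\lm\notin I^\#$ is by definition the same as $J^\lm\D{\ic}I$, so the two conditions coincide once one has established $\lL\ic I^\#$ from fullness.
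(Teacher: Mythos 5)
Your proof is correct and is essentially the paper's own argument: the paper states that "the arguments above" (the existence of a finite full-and-minimal set, together with Lemma \ref{b}) establish Proposition \ref{c}, and you have simply written out that bookkeeping, including the one point worth checking — that fullness forces $\lL\ic I^\#$, so the stated non-inclusion condition coincides with minimality. Nothing is missing.
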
 

For example, the $n$-th power of the maximal ideal $\ML_0$ has the form
$$\ML_0^n=\S_{|\ll|=n}J^\ll$$
since the polynomials in $\PL_E^\ll$ are homogeneous of degree $|\ll|.$ For $n=1$ we have $\ML_0=I^{1,0,\;,0}$ since 
$\PL_E^{1,0,\;,0}=E^*$ is the linear dual space of $E.$ More interesting examples will be studied in the next section. 

As an application of Proposition \ref{c} we determine for each $K$-invariant ideal $I$ the "maximal fibre" $I_0=H_0$ of the eigenbundle. 

\begin{theorem}\label{d} Let $I$ be a $K$-invariant ideal, written in the "minimal" form \er{4}. Then any Hilbert module closure $H=\o I$ has the maximal fibre
$$H_0=\S_{\ll\in\lL}\PL_E^\ll\q\mbox{(direct sum)}$$
at the origin.
\end{theorem}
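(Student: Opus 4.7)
The plan is to identify $H_0$ as $H\ominus\o{\ML_0\.H}$ and then compute the Peter-Weyl decomposition of that closure using the minimal form of $I.$ First I would observe that $f\in H_0$ iff $T_p^*f=0$ for every $p\in\ML_0,$ which by the adjoint identity $(T_p^*f|g)_H=(f|pg)_H$ is equivalent to $f\perp pH$ for every such $p;$ hence $H_0=H\ominus\o{\ML_0\.H}.$ Because each $T_p$ is bounded and $I$ is dense in $H=\o I,$ any $pf$ with $f\in H$ is the limit of products $pf_n$ with $f_n\in I,$ so $\o{\ML_0\.H}=\o{\ML_0\.I}$ inside $H.$ The task therefore reduces to locating the $K$-isotypic components of $\ML_0\.I.$

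From Proposition \ref{c} together with \er{3} one has the sharp Peter-Weyl decomposition
$$I=\S_{\rho\in S}\PL_E^\rho,\q S:=\{\rho\in\Nl_+^r:\ \rho\ge\ll\ \text{for some}\ \ll\in\lL\}.$$
The crucial algebraic identity I would establish is
$$\ML_0\.\PL_E^\rho=\S_{\lm>\rho}\PL_E^\lm,$$
where $\lm>\rho$ means $\lm\ge\rho$ coordinatewise with $\lm\ne\rho.$ It follows from $J^\rho=\PL_E\.\PL_E^\rho=\PL_E^\rho+\ML_0\.\PL_E^\rho$ combined with the identification $J^\rho=\S_{\lm\ge\rho}\PL_E^\lm$ from \er{3}: the homogeneity count forces the summand $\lm=\rho$ of $J^\rho$ to come only from the constants in $\PL_E,$ so all remaining summands with $\lm\ge\rho$ and $|\lm|>|\rho|$ must lie in $\ML_0\.\PL_E^\rho.$ Summing over $\rho\in S,$ the Peter-Weyl types actually occurring in $\ML_0\.I$ are exactly the $\PL_E^\lm$ with $\lm>\ll$ for some $\ll\in\lL.$

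At this point the minimality of $\lL$ closes the argument. For $\ll\in\lL,$ minimality forbids $\ll>\ll''$ with $\ll''\in\lL,$ so $\PL_E^\ll$ is absent from the components of $\ML_0\.I;$ by mutual orthogonality of distinct Peter-Weyl spaces in any $K$-invariant Hilbert inner product on $H,$ this gives $\PL_E^\ll\perp\o{\ML_0\.H},$ hence $\PL_E^\ll\ie H_0.$ Conversely, for $\lm\in S\sm\lL$ one has $\lm\ge\ll$ for some $\ll\in\lL$ with $\lm\ne\ll,$ hence $\lm>\ll,$ so $\PL_E^\lm\ie\ML_0\.I\ie\o{\ML_0\.H}.$ Orthogonal complementation inside $H$ then yields $H_0=\S_{\ll\in\lL}\PL_E^\ll$ as a finite direct sum.

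The main obstacle is the identity $\ML_0\.\PL_E^\rho=\S_{\lm>\rho}\PL_E^\lm$: the inclusion $\ie$ is immediate from degree homogeneity, but the reverse inclusion relies on the fact from \cite{U3} that $J^\rho$ exhausts all Peter-Weyl pieces $\PL_E^\lm$ with $\lm\ge\rho,$ namely formula \er{3}. Once this decomposition is in hand, the remainder of the proof is a straightforward matching of the combinatorial minimality of $\lL$ with the representation-theoretic decomposition of $\ML_0\.I.$
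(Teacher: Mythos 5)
Your proof is correct, but it follows a genuinely different route from the paper's. The paper argues both inclusions directly at the level of polynomials: for the forward inclusion it invokes the shift formula $T_\l^*p=\S_{j=1}^r(T_\l^*p)^{\ll-\Le_j}$ from \cite{U2} for linear forms $\l$ and derives a contradiction with minimality of $\lL$; for the converse it takes $\lf\in H_0$ orthogonal to $\S_\ll\PL_E^\ll,$ reduces by $K$-averaging to $\lf\in\PL_E^\lm,$ expands $\lf$ over generators $(g_\ll^i+a_\ll^i)p_i^\ll$ of the $J^\ll,$ projects with $\lp^\lm,$ and uses $T_{g_\ll^i}^*\lf=0$ to conclude $(\lf|\lf)=0.$ You instead first reformulate $H_0=H\ominus\o{\ML_0 H}=H\ominus\o{\ML_0 I}$ and then establish the structural identity $\ML_0\.\PL_E^\rho=\S_{\lm>\rho}\PL_E^\lm,$ which follows from $J^\rho=\PL_E^\rho\op\ML_0\.\PL_E^\rho$ (the sum is direct by degree), the decomposition \er{3}, and the multiplicity-freeness of the Peter-Weyl decomposition (any $K$-invariant subspace is a sum of the $\PL_E^\lm$); minimality of $\lL$ then reads off $H_0$ as the complementary isotypic sum. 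Your approach is more self-contained, bypassing the appeal to \cite{U2}, and it yields slightly more: the full Peter-Weyl decomposition of $\o{\ML_0 H}$ as $\widehat{\op}_{\lm\in T}\PL_E^\lm$ with $T=\{\lm:\ \lm>\ll\ \mbox{for some}\ \ll\in\lL\},$ rather than just the orthogonal complement. The one place worth tightening is the derivation of $\ML_0\.\PL_E^\rho=\S_{\lm>\rho}\PL_E^\lm$: the degree count only shows $\PL_E^\rho\ui\ML_0\.\PL_E^\rho=0,$ and you need to say explicitly that $\ML_0\.\PL_E^\rho$ is a $K$-invariant subspace of the multiplicity-free module $J^\rho,$ hence a sum of the $\PL_E^\lm$ with $\lm\ge\rho,$ and since $\rho$ is excluded and the sum with $\PL_E^\rho$ is all of $J^\rho,$ equality holds. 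Both your argument and the paper's implicitly assume (as the paper does throughout this section) that $H=\o I$ carries a $K$-invariant inner product, so that distinct Peter-Weyl blocks are orthogonal in $H.$
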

\begin{proof} We first show that $\PL_E^\ll\ic H_0$ for $\ll\in\lL.$ Let $p\in\PL_E^\ll$ and $\l\in E^*$ a linear form on $E.$ By \cite{U2} we have
$$T_\l^*p=\S_{j=1}^r(T_\l^*p)^{\ll-\Le_j}.$$
If $T_\l^*p\ne 0$ then $(T_\l^*p|q)\ne 0$ for some $q\in J^\lm$ with $\lm\in\lL.$ By \er{3} we have
$$q=\S_{\ln\ge\lm}q^\ln.$$ 
Therefore
$$(T_\l^*p|q)=\S_{j=1}^r((T_\l^*p)^{\ll-\Le_j}|q)=\S_{j=1}^r\S_{\ln\ge\lm}((T_\l^*p)^{\ll-\Le_j}|q^\ln).$$
It follows that there exists $j$ such that $\ll-\Le_j=\ln.$ Therefore $\ll>\ln\ge\lm.$ Since both $\ll,\lm\in\lL$ this contradicts the fact that $\lL$ is minimal.

Conversely, suppose there exists $\lf\in H_0$ which is orthogonal to $\S_{\ll\in\lL}\PL_E^\ll.$ By averaging over $K$ we may assume that $\lf\in\PL_E^\lm$ for some $\lm\notin\lL.$ We can write $\lf\in H=\o I$ as
$$\lf=\S_{\lL\ni\ll<\lm}\lf_\ll$$   
where $\lf_\ll$ belongs to the ideal $J^\ll.$ Since $J^\ll$ is generated by $\PL_E^\ll,$ there exist 
$g_\ll^i\in\ML_0,\ a_\ll^i\in\Cl$ and $p_i^\ll\in\PL_E^\ll$ such that
$$\lf=\S_{\lL\ni\ll<\lm}\S_i(g_\ll^i+a_\ll^i)\ p_i^\ll=\S_{\lL\ni\ll<\lm}\S_i g_\ll^i\ p_i^\ll
+\S_{\lL\ni\ll<\lm}\S_i a_\ll^i\ p_i^\ll.$$
Since $\lf\in\PL_E^\lm$ applying the projection $\lp^\lm$ yields 
$$\lf=\lf^\lm=\S_{\lL\ni\ll<\lm}\S_i (g_\ll^i\ p_i^\ll)^\lm.$$
It follows that
$$(\lf|\lf)=\S_{\lL\ni\ll<\lm}\S_i(\lf|(g_\ll^i\ p_i^\ll)^\lm)=\S_{\lL\ni\ll<\lm}\S_i(\lf|g_\ll^i\ p_i^\ll)
=\S_{\lL\ni\ll<\lm}\S_i(T_{g_\ll^i}^*\lf|p_i^\ll)=0.$$
Therefore $\lf=0.$
\end{proof}

As a special case of Theorem \ref{d} we have
\be{6}J_0^\ll=\PL_E^\ll,\ee
already proved in \cite{U3}
 
The description of the eigenbundle at non-zero points $\lz$ is more complicated and depends on the {\bf rank} of $\lz$ (in a Jordan theoretic sense). For $0\le\l\le r$ define the {\bf Kepler manifold} 
$$\c E_\l:=\{\lz\in E:\ \x{rank}(\lz)=\l\}.$$
The complexification $\h K$ of $K$ is a complex subgroup of $\x{GL}(E)$ called the "structure group." It acts transitively on each $\c E_\l.$ An element $c\in E$ satisfying $\{cc^*c\}=2c$ is called a {\bf tripotent} (triple idempotent). Let 
$S_\l\ic\c E_\l$ denote the compact $K$-homogeneous manifold of all tripotents of rank $\l.$ It is shown in \cite{U3} that the eigenbundle $H_D$ restricted to each "stratum" $\c E_\l$ is a {\bf homogeneous holomorphic vector bundle} under the $\h K$-action, induced by the fibre $H_c$ at any tripotent $c$ of rank $\l.$ Hence it suffices to study the fibre at a tripotent 
$c.$ In terms of the Peirce decomposition \cite{L1,L2} 
$$E=E_c^2\op E_c^1\op E_c^0,$$
where $E_c^j:=\{z\in E:\ \{cc^*z\}=jz\},$ the tangent space at $c$ is given by
$$T_c(\c E_\l)=E_c^2\op E_c^1.$$
Therefore the Peirce 0-space $E_c^0$ can be identified with the {\bf normal space} at $c.$ We abbreviate 
$E_c:=E_c^2,\ E^c:=E_c^0$ and note that $E_c$ and $E^c$ are irreducible $J^*$-subtriples of $E$ having rank $\l$ and $r-\l,$ respectively. (The Peirce 1-space $E_c^1$ is also a Jordan subtriple, but not necessarily irreducible). 

The principal tool to analyze the fibre $I_c\ap H_c$ is the {\bf normal projection map}
\be{5}\lp_c:\PL_E\to\PL_W,\ \lp_c f(w):=f(c+w)\ee
onto the polynomial algebra $\PL_W$ of the Peirce 0-space $W:=E^c$ of a tripotent $c$ of rank $\l.$ Since $W$ is an irreducible 
$J^*$-triple of complementary rank $r-\l,$ the polynomial algebra $\PL_W$ has its own Peter-Weyl decomposition
$$\PL_W=\S_{\la\in\Nl_+^{r-\l}}\PL_W^\la$$
with respect to the automorphism group $K_W.$ Here we write partitions $\la\in\Nl_+^{r-\l}$ in the form
$\la=(\la_{\l+1},\;,\la_r)$ with $\la_{\l+1}\ge\;\ge\la_r\ge 0.$ For any partition $\ll$ of length $r$ we define the "truncated partition"
$$\ll^*:=(\ll_{\l+1},\;,\ll_r)\in\Nl_+^{r-\l}.$$
Let $J_W^{\ll^*}\ic\PL_W$ denote the ideal generated by $\PL_W^{\ll^*}.$

\begin{proposition} For any $K$-invariant ideal $I,$ written in the "minimal" form \er{4}, and any tripotent $c$ with Peirce 0-space $W,$ the normal projection map \er{5} maps $I$ into the $K_W$-invariant ideal
$$I_W:=\S_{\ll\in\lL}J_W^{\ll^*}\ic\PL_W,$$
and hence induces a mapping
\be{8}I_c=I/\ML_c I\xrightarrow{\lp_c}I_W/\ML_{W,0}I_W=\y{I_W}_0\ee
between the localization of $I$ at $c$ and the maximal fibre $\y{I_W}_0$ relative to $W.$
\end{proposition}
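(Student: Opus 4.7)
The plan is to reduce the statement to two ingredients: first, that $\lp_c$ is a unital, surjective algebra homomorphism $\PL_E\to\PL_W,$ and second, that it respects the ``column truncation'' $\ll\qi\ll^*$ at the level of partition ideals, in the form $\lp_c(J^\ll)\ic J_W^{\ll^*}.$ Both the inclusion $\lp_c(I)\ic I_W$ and the descent to fibres will then follow from the decomposition \er{4} of Proposition \ref{c} together with the multiplicativity of $\lp_c.$

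I would first check the algebra-homomorphism property: $\lp_c(fg)(w)=(fg)(c+w)=f(c+w)g(c+w)=\lp_c(f)(w)\lp_c(g)(w),$ and surjectivity holds because any polynomial on $W=E^c$ lifts to $\PL_E$ via the Peirce projector $E\to E^c.$ Next I would exhibit the $K_W$-equivariance of $\lp_c$ by embedding $K_W$ in $K$ as the Jordan-triple automorphisms of $W$ extended by the identity on $E_c^2\op E_c^1;$ such elements fix $c,$ giving $\lp_c(k\.f)(w)=f(c+k^{-1}w)=\lp_c(f)(k^{-1}w)$ for $k\in K_W.$ Because $\lp_c$ is a surjective algebra homomorphism and $J^\ll$ is the $\PL_E$-ideal generated by the irreducible $K$-module $\PL_E^\ll,$ the containment $\lp_c(J^\ll)\ic J_W^{\ll^*}$ reduces to $\lp_c(\PL_E^\ll)\ic J_W^{\ll^*}.$

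The crux is this last inclusion. I would first establish it on the conical highest-weight generator $p_\ll=\lD_1^{\ll_1-\ll_2}\:\lD_r^{\ll_r}$ of $\PL_E^\ll$ by a direct Peirce computation: multiplicativity of the Jordan minors $\lD_j$ under $E=E_c^2\op E_c^1\op E_c^0$ yields $\lD_j(c+w)=1$ for $j\le\l$ and $\lD_j(c+w)=\lD_{j-\l}^W(w)$ for $j>\l,$ so $\lp_c(p_\ll)=p_{\ll^*}^W\in\PL_W^{\ll^*}.$ By $K_W$-equivariance, the $K_W$-span of $\lp_c(p_\ll)$ already fills $\PL_W^{\ll^*}\ic J_W^{\ll^*}.$ What remains is that the other translates $\lp_c(k\.p_\ll)$ for $k\in K\sm K_W$ still land in $J_W^{\ll^*};$ I expect to prove this either by a weight/branching argument (under $K\downarrow K_W,$ only $K_W$-isotypic components of types $\ln\ge\ll^*$ can have nonzero image under $\lp_c,$ since the highest weight $\ll$ of $\PL_E^\ll$ projects to $\ll^*$ on the $K_W$-Cartan factor and the weights surviving the restriction to $c+W$ dominate it) or by direct appeal to the explicit $J^\ll$-fibre description of \cite{U3}. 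This branching step is the main obstacle.

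Once $\lp_c(J^\ll)\ic J_W^{\ll^*}$ is in hand, summing over $\ll\in\lL$ gives $\lp_c(I)\ic I_W,$ and the descent to fibres is immediate: $\lp_c(f)(0)=f(c)$ yields $\lp_c(\ML_c)\ic\ML_{W,0},$ so multiplicativity gives $\lp_c(\ML_c I)\ic\ML_{W,0}\,I_W,$ and hence $\lp_c$ passes to a well-defined linear map $I_c=I/\ML_c I\to I_W/\ML_{W,0}I_W=\y{I_W}_0,$ as claimed.
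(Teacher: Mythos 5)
Your overall structure matches the paper's proof: reduce everything to the single inclusion $\lp_c(J^\ll)\ic J_W^{\ll^*}$ (the paper's \er{14}), sum over $\ll\in\lL$ to get $\lp_c(I)\ic I_W,$ and pass to quotients using $\lp_c(\ML_c)\ic\ML_{W,0}$ together with multiplicativity of $\lp_c.$ Your observation that, because $\lp_c$ is a surjective algebra homomorphism and $J^\ll=\PL_E\cdot\PL_E^\ll,$ the inclusion at the ideal level reduces to $\lp_c(\PL_E^\ll)\ic J_W^{\ll^*}$ is correct and is a useful elaboration the paper leaves implicit; the explicit invocation of multiplicativity for the well-definedness of the induced map on quotients is also cleaner than the paper's terse ``This implies the second assertion.''

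However, you have correctly identified where the real work is and then left it open. The inclusion $\lp_c(\PL_E^\ll)\ic J_W^{\ll^*}$ --- more precisely $\lp_c(J^\ll)\ic J_W^{\ll^*}$, equation \er{14} --- is exactly the theorem the paper quotes from \cite{U3}; the paper does not re-derive it. Your conical-generator computation $\lp_c(N^\ll)=N_W^{\ll^*}$ is fine (for a frame with $c=e_1+\cdots+e_\l$), and the $K_W$-translates step is fine, but the ``branching step'' for translates $k\.N^\ll$ with $k\in K\sm K^c$ is precisely the nontrivial content of the cited result, and the heuristic sketch you offer (highest-weight/branching dominance) is not an argument. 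Also, your claim that $K_W$ embeds in $K$ ``extended by the identity on $E_c^2\op E_c^1$'' is not how this works in general; the correct statement is that elements of $K^c=\{k\in K:kc=c\}$ preserve the Peirce decomposition and restrict to automorphisms of $W,$ which is enough for the equivariance computation you want. Bottom line: if at the acknowledged gap you simply cite the theorem of \cite{U3} giving \er{14} (as you hint is an option), your proof is complete and essentially identical to the paper's; as written, the branching step is a genuine unfilled gap.
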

\begin{proof} By \cite[Theorem]{U3} the normal projection map satisfies
\be{14}\lp_c:J^\ll\to J_W^{\ll^*}\ee
for any partition $\ll\in\Nl_+^r.$ In other words, if $f\in\PL_E$ has only $K$-components for $\lm\ge\ll,$ then 
$\lp_c f\in\PL_W$ has only $K_W$-components for partitions $\la\in\Nl_+^{r-\l}$ satisfying $\la\ge\ll^*.$ One can show that each such partition $\la$ occurs. Taking the (finite) sum over $\ll\in\lL,$ the first assertion follows. Since 
$(\lp_c f)(0)=f(c+0)=f(c)$ it follows that $\lp_c$ maps the maximal ideal $\ML_c$ to the maximal ideal $\ML_{W,0}\ic\PL_W$ relative to $W.$ This implies the second assertion.
\end{proof}

The main result of \cite{U3} asserts that for $I=J^\ll$ the map
\be{7}J_c^\ll=J^\ll/\ML_c J^\ll\xrightarrow{\lp_c}J_W^{\ll^*}/\ML_{W,0}J_W^{\ll^*}=\y{J_W^{\ll^*}}_0\ee
is an isomorphism. Since \er{6} applied to $W$ yields an isomorphism 
$$\y{J_W^{\ll^*}}_0\xrightarrow[\ap]{\lp_W^{\ll^*}}\PL_W^{\ll^*}$$
by taking the lowest $K_W$-type $\lf^{\ll^*}=\lp_W^{\ll^*}\lf$ of $\lf\in J_W^{\ll^*},$ \er{7} amounts to the isomorphism
\be{19}J_c^\ll\xrightarrow[\ap]{\lp_c^{\ll^*}}\PL_W^{\ll^*}\ee
where $\lp_c^{\ll^*}f:=(\lp_c f)^{\ll^*}$ denotes the lowest $K_W$-type of $\lp_c f\in J_W^{\ll^*}.$ It is likely that in general the map \er{8} is an isomorphism. This would reduce the description of the fibres $I_c$ to the combinatorial problem of finding a mimimal subset of the set $\{\ll^*:\ \ll\in\lL\}.$ This will be carried out separately.

\section{Determinantal ideals}
While the ideals $J^\ll$ are defined in terms of representation theory and make no sense beyond the Jordan theoretic setting, we now introduce $K$-invariant ideals which are defined by vanishing conditions along certain subvarieties of $E.$ The resulting analysis could shed some light on more general situations. For any ideal $I\ic\PL_E$ define the {\bf vanishing locus}
$$\VL^I:=\{z\in E:\ p(z)=0\ \forall\ p\in I\}.$$
A closed subset $X\ic E$ of the form $X=\VL^I$ for some ideal $I\ic\PL_E$ is called an algebraic variety. Conversely, for an algebraic variety $X\ic E$ consider the {\bf vanishing ideal}
$$\ML_X:=\{p\in\PL_E:\ p|_X=0\}=\{p\in\PL_E:\ p(\lz)=0\ \forall\ \lz\in X\}=\C_{\lz\in X}\ML_\lz.$$
Here $\ML_\lz\ic\PL_E$ denotes the maximal ideal of all polynomials vanishing at $\lz\in E.$ By Hilbert's basis theorem, $\ML_X$ has a finite set of generators. Hence any algebraic variety is the vanishing locus of finitely many polynomials. An ideal $I$ is prime if and only if the algebraic variety $\VL^I$ is irreducible. Conversely, an algebraic variety $X$ is irreducible if and only if $\ML_X$ is a prime ideal. In general, we have 
$$\VL^{\ML_X}=X$$
and, by Hilbert's Nullstellensatz, 
$$\ML_{\VL^I}=\F{ I}\q\mbox{(radical)}.$$
We say that $f\in\PL_E$ has {\bf order of vanishing} $\x{ord}_\lz(f)\ge n$ at $\lz\in E$ if 
$f\in\ML_\lz^n.$ Equivalently, $\x{ord}_\lz(f)>n$ if the $n$-th Taylor polynomial of $f$ at $\lz$ vanishes. Given an irreducible algebraic variety $X\ic E$ one defines the {\bf $n$-th symbolic power}
$$\ML_X^{(n)}:=\{f\in\PL_E:\ \x{ord}_\lz(f)\ge n\ \forall\ \lz\in X\}=\C_{\lz\in X}\ML_\lz^n$$
consisting of all polynomials which vanish of order $\ge n$ on $X.$ These are "primary ideals" associated to the prime ideal 
$$\ML_X=\ML_X^{(1)}=\{f\in\PL_E:\ \x{ord}_\lz(f)>0\ \forall\ \lz\in X\}.$$
The algebraic power $\ML_X^n$ is contained in $\ML_X^{(n)}$ but is generally smaller if $n>1.$ For a thorough discussion of such matters, cf. \cite{dCEP}.

For any algebraic variety $X$ let $\c X$ denote the open dense subset of all smooth (regular) points. The complement $X\sm\c X$ is the singular set of $X.$ A nested sequence $X_0\ic X_1\ic\;\ic X_{r-1}$ of algebraic varieties is called a 
{\bf stratification} if for each $\l\le 1$ the algebraic variety $X_\l$ has the singular set $X_{\l-1},$ i.e., the smoooth points
\be{9}\c X_\l=X_\l\sm X_{\l-1}.\ee
We assume that the lowest stratum $X_0=\c X_0$ is smooth, so that $X_{-1}=\OO.$ We put $\c X_r:=E\sm X_{r-1}$ as an open dense subset of $E.$ The sets $\c X_\l$ for $0\le\l\le r$ are called the strata of the stratification. The $\l$-th stratum $\c X_\l$ has the closure 
$$X_\l=\U_{0\le h\le\l}\c X_h.$$ 
Thus $X_0$ is the only closed stratum. Now consider an $r$-tuple 
\be{11}\n:=(n_1,\;,n_r)\in\Nl_+^r\ee
of integers $n_1\ge n_2\ge\;\ge n_r\ge0,$ and define the {\bf joint symbolic power} 
\be{10}\ML_{X_0,\;,X_{r-1}}^{(\n)}:=\C_{j=1}^r\ML_{X_{j-1}}^{(n_j)}\ee
consisting of all polynomials which vanish of order $\ge n_j$ along the subvariety $X_{j-1}.$ As special cases we have
$$\ML_{X_j}^{(n)}=\ML_{X_0,\;,X_{r-1}}^{(n^{(j+1)},0^{r-j-1})}$$
and in particular, for the prime ideal (if $X_j$ is irreducible) 
$$\ML_{X_j}=\ML_{X_0,\;,X_{r-1}}^{(1^{(j+1)},0^{(r-j-1)})}.$$
An irreducible $J^*$-triple $E$ has a canonical stratification
$$\{0\}=\h E_0\ic\h E_1\ic\;\ic\h E_{r-1}\ic\h E_r=E,$$
where $\h E_j$ is the set of all elements $\lz\in E$ of rank $\le j,$ called the $j$-th {\bf Kepler variety} (in a Jordan theoretic setting). The smooth points of $\h E_j$ form the Kepler manifold $\c E_j$ defined above. Hence the singular set of 
$\h E_j$ is $\h E_{j-1}$ so that the condition \er{9} is satisfied. As a special case of \er{10} define the joint symbolic power
$$\ML_E^{(\n)}:=\ML_{\h E_0,\;,\h E_{r-1}}^{(n_1,\;,n_r)}=\C_{j=1}^r\ML_{\h E_{j-1}}^{(n_j)}$$
associated with the decreasing tuple \er{11}. Thus $\ML_E^{(\n)}$ consists of all polynomials on $E$ which vanish of order 
$\ge n_j$ along the subvariety $\h E_{j-1}.$ These ideals are called "determinantal ideals" since the Kepler varieties are defined by vanishing conditions for Jordan theoretic determinants and minors. Since the Kepler varieties $\h E_j$ are $K$-invariant, $\ML_E^{(\n)}$ is a $K$-invariant ideal. As such, it is a sum of certain 
"partition" ideals $J^\ll.$ Our next result makes this precise. 

An irreducible Jordan algebra $E$ with unit element $e$ has a unique {\bf determinant polynomial} $\lD_e:E\to\Cl$ normalized by $\lD_e(e)=1$ \cite{FK2,N}. For the matrix algebra 
$E=\Cl^{r\xx r}$ and the symmetric matrices $E=\Cl_{\x{sym}}^{r\xx r}$ this is the usual determinant. For the antisymmetric matrices $E=\Cl_{\x{asym}}^{2r\xx 2r}$ we obtain the Pfaffian determinant instead. The determinant polynomial $\lD_e$ has the semi-invariance property
\be{12}\lD_e(kz)=\lD_e(ke)\lD_e(z)\ee  
for all $k\in K$ and $z\in E.$ The map $\lc:K\to\Tl$ defined by
$$\lc(k):=\lD_e(ke)$$
is a character of $K.$ It follows that for any $k\in K$
$$\lD_{ke}(z):=\lD_e(k^{-1}z)$$
is a Jordan determinant normalized at $ke.$ 

\begin{theorem}\label{e} For each tuple \er{11} the joint symbolic power has the decomposition
$$\ML_E^{(\n)}=\C_{j=1}^r\ML_{E_{j-1}}^{(n_j)}=\S_{\ll\in\Nl_{(\n)}^r}J^\ll$$
where
$$\Nl_{(\n)}^r:=\{\ll\in\Nl_+^r:\ \ll_j+\;+\ll_r\ge n_j\ \forall\ 1\le j\le r\}.$$
\end{theorem}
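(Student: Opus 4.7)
The plan is to combine Peter-Weyl decomposition with a Jordan-theoretic computation of the vanishing order of a conical highest-weight polynomial along each Kepler variety. Since every $\h E_{j-1}$ is $K$-invariant, so are the symbolic powers $\ML_{\h E_{j-1}}^{(n_j)}$ and hence their intersection $\ML_E^{(\n)}$. By Peter-Weyl, $\ML_E^{(\n)}$ is the direct sum of the pieces $\PL_E^\ll$ it contains, and because $\ML_E^{(\n)}$ is an ideal, \er{3} shows that the index set $\lL := \{\ll \in \Nl_+^r : \PL_E^\ll \ic \ML_E^{(\n)}\}$ is an order filter in $\Nl_+^r$. The set $\Nl_{(\n)}^r$ is visibly an order filter as well, so it suffices to prove $\lL = \Nl_{(\n)}^r$; the identity $\ML_E^{(\n)} = \S_{\ll \in \Nl_{(\n)}^r} J^\ll$ then follows from \er{3}.

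By irreducibility of $\PL_E^\ll$ as a $K$-module, the $K$-invariant subspace $\PL_E^\ll \ui \ML_{\h E_{j-1}}^{(n_j)}$ is either zero or all of $\PL_E^\ll$, so containment can be tested on a single nonzero element. I take this to be the conical polynomial
$$N_\ll := \Delta_1^{\ll_1-\ll_2} \Delta_2^{\ll_2-\ll_3}\cdots\Delta_r^{\ll_r}$$
attached to a fixed Jordan frame $(e_1,\ldots,e_r)$, where $\Delta_k$ is the Jordan determinant of the rank-$k$ Peirce $2$-subtriple $E_k := E_{e_1+\cdots+e_k}^2$, extended to $E$ via the Peirce projection. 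The problem then reduces to deciding when $N_\ll$ vanishes to order $\ge n_j$ on each $\h E_{j-1}$.

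The central estimate is a Jordan-theoretic pointwise bound: for $z \in E$ of rank $s$, $\x{ord}_z(\Delta_k) \ge \max(k - s, 0)$, with equality at generic rank-$s$ elements. Granting this, a short telescoping computation with $m_k = \ll_k - \ll_{k+1}$ gives for any $z \in \h E_{j-1}$ of rank $s \le j-1$
$$\x{ord}_z(N_\ll) \ge \S_{k=s+1}^r m_k (k - s) = \ll_{s+1} + \cdots + \ll_r \ge \ll_j + \cdots + \ll_r.$$
Conversely, at the explicit element $z_0 := u_1 e_1 + \cdots + u_{j-1} e_{j-1}$ with $u_i \ne 0$ one has $\Delta_k(z_0) = u_1 \cdots u_k \ne 0$ for $k \le j-1$, while for $k \ge j$ the Taylor expansion of $\Delta_k$ at $z_0$ begins in degree exactly $k - j + 1$, yielding $\x{ord}_{z_0}(N_\ll) = \ll_j + \cdots + \ll_r$. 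Hence $N_\ll \in \ML_{\h E_{j-1}}^{(n_j)}$ precisely when $\ll_j + \cdots + \ll_r \ge n_j$, and requiring this for all $j = 1, \ldots, r$ is the defining condition of $\Nl_{(\n)}^r$.

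The main obstacle is the pointwise Jordan estimate $\x{ord}_z(\Delta_k) \ge k - \x{rank}(z)$ together with its sharpness at generic rank-$s$ elements. In the matrix case $E = \Cl^{r \xx r}$ this is the classical statement that a $k \xx k$ principal minor vanishes to order exactly $k - s$ on the determinantal variety of rank-$s$ matrices, provable by row reduction. In the general $J^*$-triple setting it is proved by passing to the tube-type Jordan subalgebra $E_k$, invoking the Peirce decomposition of $E_k$ with respect to the spectral idempotent of $z$, and using the standard Taylor expansion of the Jordan norm at an element whose spectrum has $k - s$ vanishing eigenvalues; sharpness at generic $z$ follows from the nondegeneracy of the dominant term in this expansion.
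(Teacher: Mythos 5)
Your proposal is correct and follows essentially the same route as the paper's own proof: reduce by $K$-invariance and $K$-irreducibility to testing the conical highest-weight vector $N^\ll$, establish the pointwise lower bound $\x{ord}_z(N_m)\ge m-\x{rank}(z)$ by expanding the Jordan determinant at a spectral decomposition of $P_e z$, telescope over the steps of $\ll$, and verify sharpness at a generic rank-$(j-1)$ element (the paper instead uses the tripotent $e_{[j]}$, with the same numerical outcome). The only substantive gap between your write-up and the paper's is that you state the central estimate $\x{ord}_z(\Delta_k)\ge k-\x{rank}(z)$ and sketch its proof by "Taylor expansion of the Jordan norm," whereas the paper carries this out explicitly via Neher's expansion formula for $\lD_e(u+\S_i a_ie_i)$ over subsets $T\ic\{1,\ldots,m\}$; this is exactly the tool your sketch gestures at, so the two arguments coincide in substance.
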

\begin{proof} By highest weight theory \cite{U1} the space $\PL_E^\ll$ is spanned by polynomials
\be{23}N^\ll:=N_1^{\ll_1-\ll_2}N_2^{\ll_2-\ll_3}\:N_r^{\ll_r},\ee
where $e_1,\;,e_r$ is any frame of $E$ and
$$N_m(z):=\lD_e(P_ez)$$
denotes the Jordan theoretic minor for the tripotent $e=e_{[m]}=e_1+\;+e_m$ and its Peirce 2-space $E_e.$ Here $P_e:E\to E_e$ is the Peirce 2-projection. Let $\lz\in\h E_j$ and $m>j.$ By the spectral theorem applied to $E_e$ there exist a frame $c_1,\;,c_m$ of $E_e$ and $a_i\in\Cl$ such that
$$P_e\lz=\S_{i=1}^m a_i c_i.$$
Choose $k\in K_{E_e}$ such that $ke_i=c_i$ for all $i\le m.$ By \cite[Theorem 1]{N} we have
$$\lD_e(u+\S_{i=1}^m a_i e_i)=\S_{T\ic\{1,\;,m\}}\lD_{e-e_T}(P_{e-e_T}u)\ \P_{i\in T}a_i$$
for all $u\in E_e,$ where $e_T:=\S_{i\in T}e_i.$ For $z\in E$ it follows that
$$N_e(z+\lz)=\lD_e(P_e(z+\lz))=\lD_e(P_e z+\S_{i=1}^m a_i c_i)=\lD_e(k(k^{-1}P_e z+\S_{i=1}^m a_i e_i))$$
$$=\lD_e(ke)\ \lD_e(k^{-1}P_e z+\S_{i=1}^m a_i e_i)
=\lD_e(ke)\ \S_{T\ic\{1,\;,m\}}\lD_{e-e_T}(P_{e-e_T}k^{-1}P_ez)\ \P_{i\in T}a_i.$$
Since $P_e\lz$ has rank $\le j$ it follows that at most $j$ coefficients $a_i$ are non-zero, and hence $\P_{i\in T}a_i=0$ whenever the cardinality $|T|>j.$ Hence in the sum only subsets $T$ with $|T|\le j$ occur. Since the polynomial 
$\lD_{e-e_T}\oc P_{e-e_T}k^{-1}P_e$ on $E$ is homogeneous of degree $m-|T|=\x{rank}(E_{e-e_T})$ we obtain
$$\x{ord}_\lz(N_m)=\x{ord}_0(N_m(\lz+\.))\ge\mathop{\min}\limits_{|T|\le j}\x{ord}_0(\lD_{e-e_T}\oc P_{e-e_T}k^{-1}P_e)
=\mathop{\min}\limits_{|T|\le j}(m-|T|)=m-j.$$
It follows that
$$\x{ord}_\lz(N^\ll)=\S_{m=1}^r(\ll_m-\ll_{m+1})\ \x{ord}_\lz(N_m)\ge\S_{m=j+1}^r(\ll_m-\ll_{m+1})\ \x{ord}_\lz(N_m)$$
$$\ge\S_{m=j+1}^r(\ll_m-\ll_{m+1})(m-j)=\S_{m=j+1}^r\ll_m.$$
Hence the estimate $\x{ord}_\lz(f)\ge\S_{m=j+1}^r\ll_m$ holds for all $f\in\PL_E^\ll$ and a fortiori for all $f\in J^\ll.$ This shows that $J^\ll\ic\ML_E^{(\n)}$ whenever $\ll\in\Nl_{(\n)}^r.$
 
On the other hand, the tripotent $c=e_{[j]}=e_1+\;+e_j\in\c E_j$ satisfies
$$\x{ord}_c(N_m)=\x{ord}_0(N_m(c+\.))=\begin{cases}m-j&m>j\\0&m\le j\end{cases}.$$
It follows that
\be{13}\x{ord}_c(N^\ll)=\S_{m=1}^r(\ll_m-\ll_{m+1})\ \x{ord}_c(N_m)=\S_{m=j+1}^r(\ll_m-\ll_{m+1})\ \x{ord}_c(N_m)$$
$$=\S_{m=j+1}^r(\ll_m-\ll_{m+1})(m-j)=\S_{m=j+1}^r\ll_m.\ee
Thus if $\ll$ is a partition such that $J^\ll\ic\ML_E^{(\n)}$ then $N^\ll\in\ML_E^{(\n)}$ and \er{13} implies
$$\S_{m=j+1}^r\ll_m=\x{ord}_c(N^\ll)\ge n_{j+1}$$
for all $0\le j\le r-1.$ Therefore $\ll\in\Nl_{(\n)}^r.$
\end{proof}

The set $\lL$ determined in Theorem \ref{e} is not minimal. For example $\n=(10,5,1)$ has the minimal partitions $(5,4,1)$ and 
$(5,3,2),$ whereas $\n=(15,5,1)$ has the minimal partitions $(10,4,1),(9,5,1),(8,6,1),(10,3,2),(9,4,2),(8,5,2),(9,3,3)$ and 
$(8,4,3).$

We next study the normal projection map \er{5} for determinantal ideals. Given $\m=(m_{\l+1},\;,m_r)\in\Nl_+^{n-\l}$ we put
$$\Nl^{r-\l}_{(\m)}:=\{\la\in\Nl_+^{r-\l}:\ \la_j+\;+\la_r\ge m_j\ \forall\ \l<j\le r\}.$$
Since $\x{rank}_E(c+w)=\l+\x{rank}_W(w)$ it follows that
$$\h E_{j-1}\ui(c+W)=\begin{cases}\OO&j\le\l\\c+\h W_{j-\l-1}&j>\l\end{cases}.$$

\begin{theorem} Let $c$ be a tripotent of rank $\l.$ For $\n\in\Nl_+^r$ put
$$\n^*:=(n_{\l+1},\;,n_r)\in\Nl_+^{r-\l}.$$
Then the normal projection map $\lp_c$ satisfies
$$\ML_E^{(\n)}\xrightarrow{\lp_c}\ML_W^{(\n^*)}.$$
In particular, for $j>\l$
\be{16}\ML_{\h E_{j-1}}^{(n)}\xrightarrow{\lp_c}\ML_{\h W_{j-\l-1}}^{(n)}.\ee
\end{theorem}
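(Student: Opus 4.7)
The plan is to reduce everything to the geometric observation, already recorded in the excerpt, that for any $w\in W=E_c^0$ one has $\x{rank}_E(c+w)=\l+\x{rank}_W(w)$, combined with the elementary fact that translation preserves order of vanishing. Unwinding definition \er{10}, the membership $f\in\ML_E^{(\n)}$ means $\x{ord}_\lz(f)\ge n_j$ for every $1\le j\le r$ and every $\lz\in\h E_{j-1}$, while $\lp_c f\in\ML_W^{(\n^*)}$ means $\x{ord}_w(\lp_c f)\ge n_j$ for every $\l<j\le r$ and every $w\in\h W_{j-\l-1}$.

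First I would fix $j>\l$ and $w\in\h W_{j-\l-1}$ and check that $c+w\in\h E_{j-1}$: indeed $\x{rank}_E(c+w)=\l+\x{rank}_W(w)\le\l+(j-\l-1)=j-1$. Next I would invoke the translation identity: since $(\lp_c f)(w')=f(c+w')$, the Taylor expansion of $\lp_c f$ at $w$ agrees term by term with the Taylor expansion of $f$ at $c+w$, and therefore $\x{ord}_w(\lp_c f)=\x{ord}_{c+w}(f)\ge n_j$, the last inequality being $f\in\ML_{\h E_{j-1}}^{(n_j)}$. Taking the intersection over $\l<j\le r$ then gives $\lp_c f\in\ML_W^{(\n^*)}$, which settles the main claim.

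For \er{16} one specializes to the tuple $\n=(n,\;,n,0,\;,0)$ with exactly $j$ copies of $n$, for which $\ML_E^{(\n)}=\ML_{\h E_{j-1}}^{(n)}$; then $\n^*$ has exactly $j-\l$ copies of $n$ followed by zeros, so $\ML_W^{(\n^*)}=\ML_{\h W_{j-\l-1}}^{(n)}$, and the particular case falls out of the general one.

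The only non-formal ingredient is the rank-additivity $\x{rank}_E(c+w)=\l+\x{rank}_W(w)$ for $w\in E_c^0$, which is stated in the excerpt immediately preceding the theorem and is a standard consequence of the Peirce calculus applied to the tripotent $c$; it reflects the fact that $c$ and any $w\in E_c^0$ are Jordan-orthogonal, so their spectral resolutions add. Once this is taken as given, the remainder of the argument is pure unwinding of definitions, so I do not anticipate any genuine obstacle.
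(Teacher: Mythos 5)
Your argument is correct, but it is genuinely different from the paper's. The paper proves the theorem representation-theoretically: it invokes Theorem \ref{e} to reduce to $f\in J^\ll$ with $\ll_j+\;+\ll_r\ge n_j,$ uses the mapping property \er{14} of $\lp_c$ on partition ideals to get $\lp_c f\in\S_{\la\ge\ll^*}J_W^\la,$ checks that every such $\la$ satisfies the tail-sum inequalities relative to $W,$ and then applies Theorem \ref{e} a second time to $W.$ You instead work directly with the definition \er{10} of the joint symbolic powers, using only the rank additivity $\x{rank}_E(c+w)=\l+\x{rank}_W(w)$ (stated in the paper just before the theorem) and the behaviour of vanishing orders under the affine restriction $f\qi f(c+\.)|_W.$ Your route is more elementary and does not depend on the Jordan/Peter--Weyl machinery at all, so it would apply to any stratified variety with the analogous rank-additivity; the paper's route, by contrast, keeps track of the precise partition decomposition of the image, which is what the surrounding sections actually use.

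One small correction: your claim that the Taylor expansion of $\lp_c f$ at $w$ "agrees term by term" with that of $f$ at $c+w,$ hence $\x{ord}_w(\lp_c f)=\x{ord}_{c+w}(f),$ is an overstatement. Since $\lp_c f$ is the restriction of $f(c+\.)$ to the subspace $W\ic E,$ its Taylor coefficients at $w$ are only the $W$-directional ones of $f$ at $c+w$; restriction can raise the order of vanishing (even to $+\oo$), so in general one only has $\x{ord}_w(\lp_c f)\ge\x{ord}_{c+w}(f).$ Fortunately this is exactly the direction your argument needs, so the proof survives with the equality replaced by this inequality.
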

\begin{proof} By Theorem \ref{e} we may assume that $f\in\ML_E^{(\n)}$ belongs to $J^\ll$ for a partition 
$\ll=(\ll_1,\;,\ll_r)$ satisfying 
\be{15}\ll_j+\;+\ll_r\ge n_j\ee 
for all $1\le j\le r.$ Now fix $\l$ and consider the partition
$\ll^*:=(\ll_{\l+1},\;,\ll_r)$ of length $r-\l.$ By \er{14} we have
$$\lp_c f\in\S_{\la\ge\ll^*}J_W^\la$$
where $\la\ge\ll^*$ is the (partial) containment order, i.e. $\la_i\ge\ll_i$ for all $\l<i\le r.$ For $\l<j\le r$ we have
$$\la_j+\;+\la_r\ge\ll_j+\;+\ll_r\ge n_j.$$
Therefore $\la$ satisfies the analogue of \er{15} relative to $W.$ Applying Theorem \ref{e} to $W$ and the sequence 
$n_{\l+1}\ge\;\ge n_r$ of length $r-\l=\x{rank}(W),$ it follows that $\lp_c J^\ll\ic\ML_W^{(\n^*)}.$ 
The special case \er{16} corresponds to $n_1=\;=n_j=n,\ n_{j+1}=\;=n_r=0.$
\end{proof}

We now consider the special case of "step 1" partitions. Let $n\in\Nl$ and consider the ideal
\be{17}\ML_{\h E_\l}^{(n)}=\{p\in\PL_E:\ \x{ord}_\lz(p)\ge n\ \forall\ \lz\in\h E_\l\},\ee
where $0\le\l\le r$ is fixed. This corresponds to $\n=(n^{(\l+1)},0^{(r-\l-1)}).$
In this case Theorem \ref{e} yields
$$\ML_{\h E_\l}^{(n)}=\S_{\ll_{\l+1}+\;+\ll_r\ge n}J^\ll.$$
For example 
$$\ML_{\h E_0}^{(n)}=\ML_0^n=\S_{|\ll|=\ll_1+\;+\ll_r\ge n}J^\ll$$
and
$$\ML_{\h E_{r-1}}^{(n)}=\S_{\ll_r\ge n}J^\ll.$$

For the $K$-invariant ideals \er{17} it is easy to find a minimal decomposition:

\begin{theorem} For $0\le\l<r$ the ideal $\ML_{\h E_\l}^{(n)}$ is the minimal and finite sum
$$\ML_{\h E_\l}^{(n)}=\S_\la J^{\h\la}$$
taken over the (finitely many) integer tuples $\la_{\l+1}\ge\;\ge\la_r\ge 0$ satisfying 
$$|\la|:=\la_{\l+1}+\;+\la_r=n.$$ 
Here we put 
$$\h\la:=(\la_{\l+1}^{(\l)},\la_{\l+1},\;,\la_r)\in\Nl_+^r.$$
\end{theorem}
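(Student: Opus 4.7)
The plan is to combine Theorem \ref{e} with Proposition \ref{c} and then solve the remaining combinatorial problem. Taking $\n=(n^{(\l+1)},0^{(r-\l-1)})$ in Theorem \ref{e} yields the (non-minimal) decomposition
$$\ML_{\h E_\l}^{(n)}=\S_{\ll\in S}J^\ll,\q S:=\{\ll\in\Nl_+^r:\ \ll_{\l+1}+\;+\ll_r\ge n\}.$$
By Proposition \ref{c} the unique finite index set $\lL$ producing the minimal decomposition is, by the definitions together with Lemma \ref{b}, precisely the set of partitions that are minimal in $S$ under the componentwise order. The task therefore reduces to characterizing these minimal partitions combinatorially.

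For necessity, suppose $\la\in S$ is minimal. If some $j\le\l$ satisfied $\la_j>\la_{j+1}$, then decrementing $\la_j$ by $1$ would leave the sequence a partition without altering $\la_{\l+1}+\;+\la_r$, producing a strictly smaller element of $S$ and contradicting minimality; hence $\la_1=\;=\la_{\l+1}$. If the sum $\la_{\l+1}+\;+\la_r$ exceeded $n$, let $j^*$ be the largest index with $\la_{j^*}>0$; since the sum is positive one has $j^*>\l$, and decrementing $\la_{j^*}$ by $1$ preserves the non-increasing condition (the neighbour $\la_{j^*+1}=0$ is automatic, and $\la_{j^*-1}\ge\la_{j^*}$ holds whenever $j^*\ge2$). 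Minimality is again contradicted. So $\la=\h\la$ for a non-increasing tuple $(\la_{\l+1},\;,\la_r)$ of non-negative integers summing to $n$.

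For sufficiency, let $\la=\h\la$ be of this form and suppose $\lm<\la$ is a partition. If $\lm_j<\la_j$ for some $j>\l$ then $\lm_{\l+1}+\;+\lm_r<n$ so $\lm\notin S$. Otherwise $\lm_j=\la_j$ for all $j>\l$ and some $j_0\le\l$ satisfies $\lm_{j_0}<\la_{j_0}=\la_{\l+1}$; but then the partition inequality applied to $\lm$ gives $\lm_{j_0}\ge\lm_{\l+1}=\la_{\l+1}$, a contradiction. Hence $\la$ is minimal in $S$.

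Finally, the non-increasing tuples $(\la_{\l+1},\;,\la_r)$ of non-negative integers summing to $n$ are exactly the partitions of $n$ into at most $r-\l$ parts, a finite set, which gives the claimed finiteness. The main obstacle is essentially combinatorial rather than structural: once Theorem \ref{e} reduces matters to identifying minimal elements of $S$, the only care needed is verifying that the ``unit decrement'' moves witnessing non-minimality actually preserve the partition property, particularly at the boundary index $j=\l+1$ and at the rightmost non-zero position.
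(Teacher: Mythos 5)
Your proof is correct and follows essentially the same strategy as the paper's: both reduce via Theorem \ref{e} to the combinatorics of the set $S=\{\ll:\ \ll_{\l+1}+\cdots+\ll_r\ge n\}$, and both hinge on the same unit-decrement moves (at an index $\le\l$, and at the rightmost non-zero position) to show no smaller partition stays in $S$. The only organizational difference is that you characterize the minimal elements of $S$ directly and then appeal to Proposition \ref{c}'s uniqueness, whereas the paper verifies that the set $\{\h\la\}$ is an antichain and that every $\ll\in S$ dominates some $\h\la$ (by decrementing down to the boundary case $\ll_{\l+1}+\cdots+\ll_r=n+1$); the two formulations are equivalent once one notes that a full antichain in $I^\#$ must coincide with the unique minimal full set.
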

\begin{proof} We first show that the sum is minimal. If $\la,\lb\in\Nl_+^{r-\l}$ satisfy $|\la|=n=|\lb|$ and 
$\h\la\ge\h\lb,$ then $\la\ge\lb$ and hence $\la=\lb,$ showing that $\h\la=\h\lb.$ Of course there are only finitely many partitions $\la=(\la_{\l+1},\;,\la_r)$ satisfying $|\la|=n.$ By Theorem \ref{e} $\ML_{\h E_\l}^{(n)}$ corresponds to partitions 
$\ll\in\Nl_+^r$ such that the single inequality
\be{18}\ll_{\l+1}+\;+\ll_r\ge n\ee
holds. Clearly, $\ll=\h\la$ satisfies \er{18}, since $\ll_i\ge\la_i$ for $\l<i\le r.$ Thus it remains to show that \er{18} implies
$\ll\ge\h\la$ for some $\la\in\Nl_+^{r-\l}$ with $|\la|=n.$ If $\ll_{\l+1}+\;+\ll_r\ge n$ then there exists 
$\la_{\l+1}\ge\;\ge\la_r$ such that $|\la|=n$ and $\ll_i\ge\la_i$ for $\l<i\le r.$ To see this, we may assume (by induction) that 
$\ll_{\l+1}+\;+\ll_r=n+1.$ Let 
$$\ll_{\l+1}\ge\;\ge\ll_m>0=\ll_{m+1}=\;=\ll_r,$$
where $\l<m\le r.$ Then $\la:=(\ll_{\l+1},\;,\ll_{m-1},\ll_m-1,0,\;,0)$ is decreasing, satisfies $|\la|=n$ and 
$\ll\ge\h\la$ since for $1\le i\le\l$ we have 
$\h\la_i=\begin{cases}\ll_{\l+1}&m>\l+1\\\ll_{\l+1}-1&m=\l+1\end{cases}$ and therefore $\h\la_i\le\ll_{\l+1}\le\ll_i.$
\end{proof}

In the simplest case $n=1$ there is only a single choice $\la_{\l+1}=1,\ \la_{\l+2}=\;=\la_r=0$ yielding the 
"fundamental partition" $\h\la=(1^{(\l+1)},0^{(r-\l-1)})\equiv 1^{(\l+1)}.$ Thus the prime ideal 
$\ML_{\h E_\l}^{(1)}=\ML_{\h E_\l}$ has the form
$$\ML_{\h E_\l}^{(1)}=J^{\h\la}=J^{1^{(\l+1)}}.$$
For $n>1$ we cannot represent $\ML_{\h E_\l}^{(n)}$ by a single partition.

\section{Reproducing Kernels and Hermitian Structure}
The main result of \cite{U3}, formulated as the isomorphism \er{19}, determines the localization bundle $J_E^\ll$ as an abstract 
(singular) holomorphic vector bundle, without reference to a hermitian metric. If $H=\o I$ is a Hilbert module completion of a 
$K$-invariant ideal $I,$ with reproducing kernel function $\KL(z,\lz),$ the corresponding eigenbundle $\y H\ap\y I|_D$ carries the all-important {\bf hermitian structure} as a subbundle of $D\xx H.$ To make the connection one needs an explicit embedding 
$$\y I|_D\xrightarrow{}\y H\ic D\xx H.$$ 
As suggested by the "jet construction" developed in \cite{DMV} for smooth submanifolds, such a map should involve certain derivatives of $\KL(z,\lz)$ in the normal direction. In this section we carry out this program in the more complicated geometric situation related to an arbitrary partition ideal $I=J^\ll.$

Any polynomial $p$ induces a constant coefficient differential operator $\o p(\dl_z)$ on $E,$ depending in a conjugate-linear way on $p.$ Let $(p|q)$ denote the Fischer-Fock inner product of polynomials $p,q\in\PL_E$ (anti-linear in $p$) and let
$\EL^\ll(z,\lz)=\EL_\lz^\ll(z)$ denote the reproducing kernel of $\PL_E^\ll.$

Every irreducible $J^*$-triple $E$ has two "characteristic multiplicities" $a,b$ \cite{Ar,U1,L1,L2} such that
$$\f dr=1+\f a2(r-1)+b$$

\begin{lemma} For a Jordan triple $E,$ the determinant function $N_e$ at a maximal tripotent $e\in S_r$ satisfies
\be{20}\EL^\ll(z,\lz)=C_r^n(\ll)\ N_e^n(z)\o{N_e(\lz)}^n\ \EL^{\ll-n^{(r)}}(z,\lz)\ee
for $\ll\ge n^{(r)}$ and $\lz\in E_e,$ where
$$C_r^n(\ll)=\P_{j=1}^r\f1{(\ll_j-n+1+\tfrac a2(r-j))_n}.$$
Moreover,
\be{21}\o N_e^n(\dl_z)\EL_\lz^\ll=\o{N_e(\lz)}^n\ \EL_\lz^{\ll-n^{(r)}}.\ee
\end{lemma}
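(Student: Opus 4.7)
The plan is to prove the derivative identity \er{21} first, then derive \er{20} by uniqueness of the reproducing kernel combined with a Schur-lemma computation on the Peirce 2-space.

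\smallskip

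\textbf{Step 1: Proof of \er{21}.} For each fixed $\lz$, both sides are polynomials in $z$ lying in $\PL_E^{\ll-n^{(r)}}$ (the left side because $\o{N_e^n}(\dl_z)$ lowers degree by $nr$ and sends the $K$-irreducible $\PL_E^\ll$ into $\PL_E^{\ll-n^{(r)}}$). To establish equality it suffices to test against an arbitrary $p\in\PL_E^{\ll-n^{(r)}}$ via the Fischer--Fock inner product. The Fischer adjoint relation $M_{N_e^n}^*=\o{N_e^n}(\dl_z)$ yields
$$(p\,|\,\o{N_e^n}(\dl_z)\EL_\lz^\ll)_\FL=(N_e^n p\,|\,\EL_\lz^\ll)_\FL.$$
By the Pieri-type containment $N_e^n\.\PL_E^{\ll-n^{(r)}}\ic\PL_E^\ll$ (multiplication by the highest-weight vector $N_e^n\in\PL_E^{n^{(r)}}$ shifts the Peter--Weyl type up by $n^{(r)}$, as $\PL_E^\ll$ is the only $K$-irreducible with matching highest weight in the tensor product), the right-hand side evaluates via the reproducing property to $\o{(N_e^n p)(\lz)}=\o{N_e(\lz)^n p(\lz)}$. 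An analogous computation for $(p\,|\,\o{N_e(\lz)}^n\EL_\lz^{\ll-n^{(r)}})_\FL$ yields the same quantity, proving \er{21}.

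\smallskip

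\textbf{Step 2: Reducing \er{20} to a key identity on $E_e$.} Set $G_\lz(z):=C_r^n(\ll)\,N_e^n(z)\,\o{N_e(\lz)}^n\,\EL_\lz^{\ll-n^{(r)}}(z)$ for $\lz\in E_e$. Again by the containment, $G_\lz\in\PL_E^\ll$ as polynomial in $z$, so by uniqueness of the reproducing kernel of $\PL_E^\ll$ it suffices to check $(G_\lz\,|\,f)_{\FL,z}=f(\lz)$ for every $f\in\PL_E^\ll$. A direct application of Fischer adjointness to $M_{N_e^n}$ converts this to the pointwise identity
$$C_r^n(\ll)\,N_e(\lz)^n\.(\o{N_e^n}(\dl_z)f)(\lz)=f(\lz)\qquad(f\in\PL_E^\ll,\ \lz\in E_e).$$

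\smallskip

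\textbf{Step 3: The key identity via $K_{E_e}$-equivariance.} Regard both sides of the last display as linear maps $\PL_E^\ll\to\PL_{E_e}^\ll$ in the variable $f$. The right side $f\qi f|_{E_e}$ is manifestly $K_{E_e}$-equivariant. For the left side, the semi-invariance $N_e(kz)=\lc(k)N_e(z)$ from \er{12} makes the characters from $M_{N_e^n}$ and $\o{N_e^n}(\dl_z)$ cancel, leaving a $K_{E_e}$-equivariant map. The branching multiplicity of $\PL_{E_e}^\ll$ inside $\PL_E^\ll|_{K_{E_e}}$ is one (standard for bounded symmetric domains; the unique copy is extracted by restriction to $E_e$), so Schur's lemma forces proportionality. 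To pin down the scalar, evaluate on the conical highest-weight vector $f=N^\ll$ of \er{23}, which is a pullback from $E_e$: the operator $\o{N_e^n}(\dl_z)$ then acts purely in $E_e$-directions, so $\o{N_e^n}(\dl)N^\ll$ is a $T$-weight vector of weight $\ll-n^{(r)}$ in $\PL_{E_e}^{\ll-n^{(r)}}$, hence proportional to $N^{\ll-n^{(r)}}$. The constant of proportionality is determined by Fischer adjointness combined with the classical Pochhammer norm identity
$$\|N^\ll\|^2_\FL=\P_{j=1}^r(\ll_j-n+1+\tfrac a2(r-j))_n\.\|N^{\ll-n^{(r)}}\|^2_\FL$$
on the Jordan algebra $E_e$, yielding $\o{N_e^n}(\dl)N^\ll=C_r^n(\ll)^{-1}N^{\ll-n^{(r)}}$. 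Since $N_e^n\cdot N^{\ll-n^{(r)}}=N^\ll$ on $E_e$, the prefactor $C_r^n(\ll)$ on the left side produces exactly $N^\ll=f|_{E_e}$, establishing the key identity.

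\smallskip

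\textbf{Main obstacle.} The two delicate representation-theoretic inputs are the Pieri-type containment $N_e^n\.\PL_E^{\ll-n^{(r)}}\ic\PL_E^\ll$ and the multiplicity-one branching of $\PL_{E_e}^\ll$ in $\PL_E^\ll|_{K_{E_e}}$; both are standard in the Faraut--Koranyi theory of bounded symmetric domains but must be invoked as black boxes rather than derived on the spot. The Pochhammer norm identity is classical, but one should note that Fischer--Fock norms on $E$ and on the Jordan subalgebra $E_e$ coincide only for polynomials pulled back via the Peirce-$2$ projection---which is exactly the case for the conical polynomial $N^\ll$, allowing the norm computation to be transferred between the two settings.
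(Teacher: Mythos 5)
Your proof is correct in substance but follows a genuinely different route from the paper's. The paper works first in the unital case $E=E_e$: it computes the eigenvalue relation $\o\lD_e^n(\dl_z)(\lD_e^n p)=\tfrac{(d/r)_{\ll+n^{(r)}}}{(d/r)_\ll}\,p$ via the Hardy space over the Shilov boundary (where $|\lD_e|=1$) together with the Faraut--Kor\'anyi formula, writes $\EL^\ll$ as an undetermined multiple $a_\ll$ of $\lD_e^n(z)\o{\lD_e(\lz)}^n\EL^{\ll-n^{(r)}}(z,\lz)$, pins down $a_\ll$ by evaluating at $(e,e)$ using $\EL^\ll(e,e)=d_\ll/(d/r)_\ll$, deduces \er{21} from \er{20}, and only then passes to the non-unital case through the Peirce projection $P_e$. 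You instead prove \er{21} first by Fischer adjointness, reduce \er{20} to a pointwise identity, and settle that by Schur's lemma plus evaluation on the conical polynomial $N^\ll$. Your quantitative input, the norm ratio of $N^\ll$ and $N^{\ll-n^{(r)}}$, is exactly the paper's Hardy-space eigenvalue $(d/r)_\ll/(d/r)_{\ll-n^{(r)}}$ in the tube case $b=0$, so the computational core is the same; you cite it where the paper derives it. What your route buys is that \er{21} is obtained independently of \er{20}, and the constant acquires a cleaner conceptual meaning.

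Two points need shoring up. First, your justification of the Pieri-type containment $N_e^n\.\PL_E^{\ll-n^{(r)}}\ic\PL_E^\ll$ is not a proof: multiplying by a highest-weight vector does not in general preserve irreducible type (the product with a non-extreme vector of the other factor can acquire components of strictly lower type), so ``the only $K$-irreducible with matching highest weight'' is not the right reason. The containment is true, but the honest argument uses the specific shape of the Peter--Weyl decomposition (for instance, in type $I_{r,s}$ the $GL_r$-tensor factor of $\PL_E^{n^{(r)}}$ is the one-dimensional $\det^n$, which rigidifies the product); this is precisely what the paper's reduction to the unital Peirce $2$-space is designed to circumvent, since on $E_e$ the function $N_e=\lD_e$ is semi-invariant under the full group by \er{12} and the containment is immediate. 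Second, your Schur argument needs more than multiplicity one of $\PL_{E_e}^\ll$ in the restriction of $\PL_E^\ll$: you must also rule out constituents $\PL_{E_e}^\lm$ with $\lm\ne\ll$ occurring in the images of the two maps (equivalently, that restriction to $E_e$ sends $\PL_E^\ll$ into $\PL_{E_e}^\ll$), since otherwise two equivariant maps into $\PL_{E_e}$ need not be proportional. The paper relies on the same fact implicitly through the step $\EL^\ll(z,\lz)=\EL^\ll(P_ez,\lz)$, so you are in good company, but it belongs explicitly on your list of black boxes.
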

\begin{proof} We use the Jordan theoretic Pochhammer symbols $(s)_\ll$ and the "Faraut-Kor\'anyi formula" \cite{FK1,FK2}. Suppose first that $E=E_e$ is unital, with unit element $e$ and determinant $\lD_e.$ The parameter $s=d/r$ in the continuous Wallach set corresponds to the {\bf Hardy space} $H^2(S)$ over the Shilov boundary $S$ of $D.$ Since $|\lD_e|=1$ on $S$ we obtain for $p,q\in\PL_E^\ll$
$$(\o\lD_e^n(\dl_z)(\lD_e^np)|q)=(\lD_e^np|\lD_e^nq)=(d/r)_{\ll+n^{(r)}}\ (\lD_e^np|\lD_e^nq)_S$$
$$=(d/r)_{\ll+n^{(r)}}\ (p|q)_S=\f{(d/r)_{\ll+n^{(r)}}}{(d/r)_\ll}\ (p|q).$$
Since $q$ is arbitrary, it follows that
$$\o\lD_e^n(\dl_z)(\lD_e^n p)=\f{(d/r)_{\ll+n^{(r)}}}{(d/r)_\ll}\ p$$
for all partitions $\ll$ and $p\in\PL_E^\ll.$ An application of Schur orthogonality \cite[Proposition XI.4.1]{FK2} shows that
$$\EL^\ll(e,e)=\f{d_\ll}{(d/r)_\ll},$$
where $d_\ll:=\dim\PL_E^\ll.$ If $\ll\ge n^{(r)}$ we can write 
$$\EL^\ll(z,\lz)=a_\ll\ \lD_e^n(z)\o{\lD_e(\lz)}^n\ \EL^{\ll-n^{(r)}}(z,\lz)$$ 
for some coefficient $a_\ll.$ It follows that
$$\f{d_\ll}{(d/r)_\ll}=\EL^\ll(e,e)=a_\ll\ \lD_e^n(e)\o{\lD_e(e)}^n\ \EL^{\ll-n^{(r)}}(e,e)$$
$$=a_\ll\ \EL^{\ll-n^{(r)}}(e,e)=a_\ll\ \f{d_{\ll-n^{(r)}}}{(d/r)_{\ll-n^{(r)}}}.$$
Since $d_\ll=d_{\ll-n^{(r)}}$ in the unital case it follows that
$$a_\ll=\f{d_\ll}{d_{\ll-n^{(r)}}}\ \f{(d/r)_{\ll-n^{(r)}}}{(d/r)_\ll}=\f{(d/r)_{\ll-n^{(r)}}}{(d/r)_\ll}=C_r^n(\ll).$$
This proves \er{20}. Moreover,
$$\f1{C_r^n(\ll)}\o\lD_e^n(\dl_z)\EL_\lz^\ll=\o\lD_e^n(\dl_z)\(\lD_e^n\ \o{\lD_e(\lz)}^n\ \EL_\lz^{\ll-n^{(r)}}\)$$
$$=\o{\lD_e(\lz)}^n\ \o\lD_e^n(\dl_z)(\lD_e^n\ \EL_\lz^{\ll-n^{(r)}})
=\o{\lD_e(\lz)}^n\ \f1{C_r^n(\ll)}\ \EL_\lz^{\ll-n^{(r)}}=\f1{C_r^n(\ll)}\o{\lD_e(\lz)}^n\ \EL_\lz^{\ll-n^{(r)}}.$$

In the non-unital case, let $P_e$ be the Peirce 2-projection onto $E_e.$ Since $N_e(z)=\lD_e(P_ez)$ by definition, applying the unital case to $E_e$ and using $P_e\lz=\lz$ we obtain
$$\EL^\ll(z,\lz)=\EL^\ll(P_ez,\lz)=C_r^n(\ll)\ \lD_e^n(P_ez)\o{\lD_e(\lz)}^n\ \EL^{\ll-n^{(r)}}(P_ez,\lz)$$
$$=C_r^n(\ll)\ N_e^n(z)\o{N_e(\lz)}^n\ \EL^{\ll-n^{(r)}}(z,\lz).$$
The second assertion follows with $\o\lD_e^n(\dl_z)f=\o N_e^n(\dl_z)(f\oc P_e).$
\end{proof}

The identity \er{21}, written as
$$\o N_e^n(\dl_z)\EL^\ll(z,\lz)=\o{N_e(\lz)}^n\ \EL^{\ll-n^{(r)}}(z,\lz),$$
implies
$$N_e^n(\o\dl_\lz)\EL^\ll(z,\lz)=N_e^n(z)\ \EL^{\ll-n^{(r)}}(z,\lz),$$
since $\o{\EL^\ll(z,\lz)}=\EL^\ll(\lz,z).$ Thus for $\lz\in E_e$ we have
\be{22}N_e^n(\o\dl_\lz)\EL_\lz^\ll=N_e^n\ \EL_\lz^{\ll-n^{(r)}}\ee
as holomorphic polynomials in $z.$

\begin{lemma}\label{f} Let $\ll\in\Nl_+^r$ be a partition and $\lz\in E$ have rank $\l.$ Then $\EL_\lz^\ll\ne0$ if and only if 
$\ll$ has length $\le\l.$
\end{lemma}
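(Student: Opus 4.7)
The plan is to use the pairing $(\EL_\lz^\ll|p)=p(\lz)$ for $p\in\PL_E^\ll,$ which shows that $\EL_\lz^\ll=0$ if and only if the irreducible $K$-module $\PL_E^\ll$ vanishes identically at $\lz.$ By \er{23} this module is $K$-spanned by the conical polynomial $N^\ll=N_1^{\ll_1-\ll_2}\:N_r^{\ll_r}$ attached to a fixed frame $e_1,\;,e_r,$ and since the $K$-action preserves the rank stratification $\c E_\l,$ the vanishing of $\PL_E^\ll$ at $\lz$ is equivalent to the vanishing of $N^\ll$ on the entire $K$-orbit $K\.\lz\ic\c E_\l.$ Using the Jordan-theoretic spectral theorem \cite{L1,L2} I would move $\lz$ into the normal form $\lz=\S_{j=1}^\l t_j\,e_j$ with $t_j\ne 0,$ reducing the problem to evaluating $N^\ll$ at this single base point.

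First I would handle the direction where $\ll$ has length $>\l,$ so that $\ll_{\l+1}\ge 1$ and therefore $\ll_m-\ll_{m+1}>0$ for some $m>\l.$ The corresponding factor $N_m(\lz)=\lD_{e_{[m]}}(P_{e_{[m]}}\lz)$ vanishes, because $P_{e_{[m]}}\lz$ has rank at most $\l<m$ inside the rank-$m$ unital Jordan subalgebra $E_{e_{[m]}}$ and is therefore non-invertible, killing its Jordan determinant. This is precisely the order estimate $\x{ord}_\lz(N_m)\ge m-\l$ already established in the proof of Theorem \ref{e}. By $K$-invariance of rank the same computation holds at every $k^{-1}\lz\in\c E_\l,$ so $N^\ll\equiv 0$ on $K\.\lz;$ consequently every $p\in\PL_E^\ll$ vanishes at $\lz$ and $\EL_\lz^\ll=0.$

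For the converse direction where $\ll$ has length $\le\l,$ we have $\ll_{\l+1}=\;=\ll_r=0,$ so the factors in $N^\ll$ for $m>\l$ drop out and $N^\ll=\P_{m=1}^\l N_m^{\ll_m-\ll_{m+1}}.$ With $\lz$ in the normal form above, the Peirce-2 projection $P_{e_{[m]}}\lz=\S_{j=1}^m t_j\,e_j$ is a spectral decomposition inside the unital Jordan algebra $E_{e_{[m]}},$ so its Jordan determinant equals the product of its spectral values, $N_m(\lz)=t_1\:t_m\ne 0.$ Hence $N^\ll(\lz)=\P_{m=1}^\l(t_1\:t_m)^{\ll_m-\ll_{m+1}}\ne 0,$ and the reproducing identity $(\EL_\lz^\ll|N^\ll)=N^\ll(\lz)\ne 0$ forces $\EL_\lz^\ll\ne 0.$

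The main (though mild) obstacle is the rank-monotonicity statement $\x{rank}_{E_{e_{[m]}}}(P_{e_{[m]}}\lz)\le\x{rank}_E(\lz)$ that underlies the vanishing of $N_m$ in the first direction; fortunately this is exactly the content of the order computation already carried out in the proof of Theorem \ref{e}, so one can simply invoke that calculation rather than redo it.
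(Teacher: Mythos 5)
Your proposal is correct and follows essentially the same route as the paper: both directions rest on the conical polynomial $N^\ll$ from \er{23}, with the forward direction coming from the vanishing of the factors $N_m$ for $m>\l$ (via rank-monotonicity of Peirce projections, as you correctly note is implicit in the proof of Theorem \ref{e}) and the converse from the explicit evaluation $N^\ll(\lz)=\P_j\lz_j^{\ll_j}\ne0$ at a spectral normal form. The paper simply asserts the forward implication ("all $p\in\PL_E^\ll$ vanish on $\h E_\l$") without the $K$-orbit/$N_m$ unwinding you supply, but the underlying mechanism is the same.
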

\begin{proof} If $\ll$ has length $>\l$ then all $p\in\PL_E^\ll$ vanish on $\h E_\l.$ Therefore 
$$(\EL_\lz^\ll|p)=p(\lz)=0$$
and hence $\EL_\lz^\ll=0$ since $p$ is arbitrary. Now suppose $\ll$ has length $\le\l.$ Consider the spectral decomposition
$$\lz=\S_{i=1}^\l\lz_i e_i$$ 
for a frame $(e_i).$ The conical polynomial $N^\ll$ defined as in \er{23} satisfies
$$N^\ll(\lz)=\lz_1^{\ll_1}\:\lz_\l^{\ll_\l}\ne 0.$$
Since $N^\ll(\lz)=(\EL_\lz^\ll|N^\ll)$ it follows that $\EL_\lz^\ll\ne 0.$ 
\end{proof}

For $n\in\Nl,$ define $n^{(m)}=(n,\;,n,0,\;,0),$ with $n$ repeated $m$ times. Thus the Young diagram $[n^{(m)}]=[1,m]\xx[1,n].$ Any partition $\ll$ can be written as
\be{24}\ll=(n_1^{(\l_1)},n_2^{(\l_2-\l_1)},\;,n_t^{(\l_t-\l_{t-1})},0^{(r-\l_t)}),\ee
where $1\le \l_1<\;<\l_t\le r$ and $n_1>n_2>\;>n_t>0.$ Thus $t$ is the number of "steps" in the partition. In other words,
$$\ll_1=\;=\ll_{\l_1}=n_1>\ll_{1+\l_1}=\;=\ll_{\l_2}=n_2>\ll_{1+\l_2}\;$$
Define for $1\le s\le t$ 
$$\lO_s:=\{\lz\in D:\ \l_{s-1}\le\x{rank}(\lz)<\l_s\}=D\ui\U_{i=\l_{s-1}}^{\l_s-1}\c E_\l.$$
We also consider the "regular" points
$$\c D:=\{\lz\in D:\ \x{rank}(\lz)\ge\l_t\}.$$
Since $\ll$ has length $\l_t,$ Lemma \ref{f} implies $\EL_\lz^\ll\ne 0$ if $\x{rank}(\lz)\ge\l_t.$ Hence $\KL_\lz$ does not vanish at $\lz\in\c D.$ For the "singular" points $\lz\in D$ there exists a unique $s\le t$ such that $\lz\in\lO_s.$ For 
$1\le h\le k\le t$ define
$$\ll_h^k:=(n_h^{(\l_h-\l_{h-1})},n_{h+1}^{(\l_{h+1}-\l_h)},\;,n_k^{(\l_k-\l_{k-1})})$$
as a partition of length $\l_k-\l_{h-1}.$ Let $\lm\in\Nl_+^{\l_s}$ satisfy $\lm\ge\ll_1^s.$ For each $s\le k\le t$ consider the partition
$$(\lm,\ll_{s+1}^k)=(\lm,n_{s+1}^{(\l_{s+1}-\l_s)},n_{s+2}^{(\l_{s+2}-\l_{s+1})},\;,n_k^{(\l_k-\l_{k-1})})$$
of length $\l_k.$ Here $\ll_{s+1}^k$ is empty for $s=k.$ Starting with the Peter-Weyl expansion
$$\KL(z,\lz)=\S_{\lm\ge\ll}a_\lm\ \EL^\lm(z,\lz)$$
of the reproducing kernel $\KL(z,\lz)$ of $H=\o J^\ll,$ with coefficients $a_\lm>0,$ we define $K$-invariant sesqui-holomorphic functions 
$\KL^s(z,\lz)=\KL_\lz^s(z)$ on $D\xx D$ by the Peter-Weyl expansion
\be{25}\KL_\lz^s=\S_{\Nl_+^{\l_s}\ni\lm\ge\ll_1^s}a_{(\lm,\ll_{s+1}^t)}\ \EL_\lz^{\lm-n_s^{(\l_s)}}
\P_{k=s}^t C_{\l_k}^{n_k-n_{k+1}}((\lm,\ll_{s+1}^k)-n_{k+1}^{(\l_k)}).\ee
More explicitly, the constant is given by
$$C_{\l_t}^{n_t}(\lm,\ll_{s+1}^t)\.C_{\l_{t-1}}^{n_{t-1}-n_t}((\lm,\ll_{s+1}^{t-1})-n_t^{(\l_{t-1})})
\.C_{\l_{t-2}}^{n_{t-2}-n_{t-1}}((\lm,\ll_{s+1}^{t-2})-n_{t-1}^{(\l_{t-2})})$$
$$\:C_{\l_{s+1}}^{n_{s+1}-n_{s+2}}((\lm,n_{s+1}^{(\l_{s+1}-\l_s)})-n_{s+2}^{(\l_{s+1})})
\.C_{\l_s}^{n_s-n_{s+1}}(\lm-n_{s+1}^{(\l_s)}).$$

\begin{lemma} The kernel $\KL_\lz^s$ does not vanish if $\x{rank}(\lz)\ge\l_{s-1}$ and vanishes if $\x{rank}(\lz)<\l_{s-1}.$ Thus $\KL^s$ vanishes precisely on $\o\lO_{s-1}=\U_{0\le k<s}\lO_k.$
\end{lemma}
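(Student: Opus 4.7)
The plan is to read the claim directly off the Peter--Weyl expansion \er{25} defining $\KL_\lz^s,$ combining two ingredients: Lemma \ref{f}, which characterises the vanishing of each covariant building block $\EL_\lz^\la$ in terms of $\x{rank}(\lz)$ and the length of $\la,$ and a linear-independence argument that reduces the vanishing of the whole sum to the vanishing of each summand. The basic observation is that the shift $\lm\qi\lm-n_s^{(\l_s)}$ is injective on the index set $\{\lm\in\Nl_+^{\l_s}:\lm\ge\ll_1^s\},$ so the summands $\EL_\lz^{\lm-n_s^{(\l_s)}},$ viewed as polynomials in $z,$ lie in pairwise inequivalent $K$-isotypic subspaces $\PL_E^{\lm-n_s^{(\l_s)}}\ic\PL_E$ and are therefore linearly independent. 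Once one knows that every scalar coefficient in \er{25} is strictly positive, the holomorphic function $\KL_\lz^s$ vanishes identically in $z$ if and only if every individual summand vanishes.

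Next I would isolate the key combinatorial length computation: for every $\lm\ge\ll_1^s$ in the index set, the partition $\lm-n_s^{(\l_s)}$ has length at least $\l_{s-1},$ with equality attained exactly at $\lm=\ll_1^s.$ Indeed $(\ll_1^s)_j\ge n_{s-1}>n_s$ for $j\le\l_{s-1}$ forces $\lm_j-n_s>0$ in that range, while at $\lm=\ll_1^s$ one has $\lm_j=n_s$ for $\l_{s-1}<j\le\l_s.$ The dichotomy then follows from Lemma \ref{f}: if $\x{rank}(\lz)<\l_{s-1},$ every $\EL_\lz^{\lm-n_s^{(\l_s)}}$ in the sum has an index of length $>\x{rank}(\lz)$ and therefore vanishes, so $\KL_\lz^s=0;$ whereas if $\x{rank}(\lz)\ge\l_{s-1},$ the distinguished summand at $\lm=\ll_1^s$ is non-zero (its index having length exactly $\l_{s-1}$), and linear independence then rules out cancellation by the other terms. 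Translating the rank condition as $\lz\in\U_{k=1}^{s-1}\lO_k=\o\lO_{s-1}$ yields the stated description of the zero locus.

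The one piece of bookkeeping that deserves care is the positivity of the Pochhammer constants $C_{\l_k}^{n_k-n_{k+1}}(\,\cdot\,)$ appearing in \er{25}, since without it a spurious cancellation in the non-vanishing direction would be conceivable. Here I would check that each argument $(\lm,\ll_{s+1}^k)-n_{k+1}^{(\l_k)}$ dominates $(n_k-n_{k+1})^{(\l_k)}$ componentwise --- a direct consequence of the chain $n_s>n_{s+1}>\hdots>n_t$ together with the lower bound $\lm\ge\ll_1^s\ge n_s^{(\l_s)}$ --- which keeps every Pochhammer factor in the product formula for $C_r^n$ strictly positive, since the characteristic multiplicity $a$ is non-negative. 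After that the argument is essentially one line, and the main subtle step --- the only place where the step-structure of $\ll$ really enters --- is the length computation identifying $\l_{s-1}$ as the critical rank.
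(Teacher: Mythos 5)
Your argument is correct and follows essentially the same route as the paper's proof: apply Lemma \ref{f} term by term to the expansion \eqref{25}, observe that the lowest index $\ll_1^s-n_s^{(\l_s)}$ has length exactly $\l_{s-1}$ while every index $\lm-n_s^{(\l_s)}$ with $\lm\ge\ll_1^s$ has length $\ge\l_{s-1}$, and conclude. The paper leaves implicit both the linear independence of the distinct Peter--Weyl components (it simply says the lowest term "occurs") and the strict positivity of the constants $C_{\l_k}^{n_k-n_{k+1}}(\cdot)$; your explicit verification of these two points is correct and fills in exactly the details the paper omits.
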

\begin{proof} Since the partition $\ll_1^s-n_s^{(\l_s)}=\ll_1^{s-1}$ has length $\l_{s-1}$ it follows that 
$\EL_\lz^{\ll_1^s-n_s^{(\l_s)}}\ne 0$ if $\x{rank}(\lz)\ge\l_{s-1}.$ This implies $\KL_\lz^s\ne 0$ since 
$\EL^{\ll_1^s-n_s^{(\l_s)}}$ occurs as the lowest term in $\KL^s.$ On the other hand, if $\x{rank}(\lz)<\l_{s-1}$ then 
$\EL_\lz^{\lm-n_s^{(\l_s)}}=0$ for all $\lm\ge\ll_1^s$ since $\lm-n_s^{(\l_s)}\ge\ll_1^s-n_s^{(\l_s)}=\ll_1^{s-1}$ has length 
$\ge\l_{s-1}.$
\end{proof}

\begin{proposition}\label{g} Let $\x{rank}(\lz)\le\l_t$ and write $\lz=\lim_{\Le\to 0}\lz_\Le,$ where $\lz_\Le\in\c E_{c_t}$ for some tripotent $c_t\in S_{\l_t}.$ Then $N_{c_t}(\lz_\Le)\ne 0$ and
$$\lim_{\Le\to0}\f{\KL_{\lz_\Le}}{\o{N_{c_t}(\lz_\Le)}^{n_t}}=N_{c_t}^{n_t}\ \KL_\lz^t.$$
Similarly, for $1\le s<t$ let $\x{rank}(\lz)\le\l_s$ and write $\lz=\lim_{\Le\to 0}\lz_\Le,$ where $\lz_\Le\in\c E_{c_s}$ for some tripotent $c_s\in S_{\l_s}.$ Then $N_{c_s}(\lz_\Le)\ne 0$ and
$$\lim_{\Le\to0}\f{\KL_{\lz_\Le}^{s+1}}{\o{N_{c_s}(\lz_\Le)}^{n_s-n_{s+1}}}=N_{c_s}^{n_s-n_{s+1}}\ \KL_\lz^s.$$
\end{proposition}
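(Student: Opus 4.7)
The plan is to establish a pointwise identity on the open set where $\lz_\Le$ has full rank in its Peirce $2$-space, and then pass to the limit $\Le\to 0$ by continuity of the reproducing kernels. Since $c_s\in S_{\l_s}$ is a maximal tripotent of the sub-$J^*$-triple $E_{c_s}$ (whose rank is $\l_s$) and $\lz_\Le\in E_{c_s}$ has full rank there, the minor $N_{c_s}(\lz_\Le)=\lD_{c_s}(\lz_\Le)$ does not vanish. The main tool is the reduction formula \er{20} applied in the subtriple $E_{c_s}$ (or $E_{c_t}$), combined with Lemma \ref{f} which eliminates all Peter--Weyl components indexed by partitions that are too long to survive at $\lz_\Le$.

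For the first identity, expand $\KL_{\lz_\Le}=\S_{\lm\ge\ll}a_\lm\,\EL^\lm_{\lz_\Le}$. Lemma \ref{f} keeps only $\lm$ of length $\le\l_t$; for these, $\lm\ge\ll$ forces $\lm_i\ge n_t$ for $i\le\l_t$, hence $\lm\ge n_t^{(\l_t)}$, and \er{20} applied in $E_{c_t}$ with $r\to\l_t$, $n\to n_t$, $e\to c_t$ gives
$$\EL^\lm(z,\lz_\Le)=C_{\l_t}^{n_t}(\lm)\,N_{c_t}^{n_t}(z)\,\o{N_{c_t}(\lz_\Le)}^{n_t}\,\EL^{\lm-n_t^{(\l_t)}}(z,\lz_\Le).$$
Summing and recognizing the residual series as \er{25} with $s=t$ (using the conventions $n_{t+1}=0$ and $\ll_{t+1}^t=\OO$) yields the factorization $\KL_{\lz_\Le}=\o{N_{c_t}(\lz_\Le)}^{n_t}\,N_{c_t}^{n_t}\,\KL^t_{\lz_\Le}$; dividing by $\o{N_{c_t}(\lz_\Le)}^{n_t}$ and passing to the limit gives the first claim. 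The second identity follows the same pattern starting from the expansion \er{25} of $\KL^{s+1}_{\lz_\Le}$: Lemma \ref{f} forces $\lm-n_{s+1}^{(\l_{s+1})}$ to have length $\le\l_s$, which combined with $\lm\ge\ll_1^{s+1}$ compels $\lm=(\lm',n_{s+1}^{(\l_{s+1}-\l_s)})$ for some $\lm'\in\Nl_+^{\l_s}$ with $\lm'\ge\ll_1^s$ (in particular $\lm'_i\ge n_s$). Then \er{20} applied in $E_{c_s}$ with $n=n_s-n_{s+1}$ reduces $\EL^{\lm'-n_{s+1}^{(\l_s)}}$ to $\EL^{\lm'-n_s^{(\l_s)}}$ and extracts the factor $N_{c_s}^{n_s-n_{s+1}}(z)\,\o{N_{c_s}(\lz_\Le)}^{n_s-n_{s+1}}$.

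The substantive difficulty is combinatorial bookkeeping of the coefficients. After the reduction one must identify the extracted constant $C_{\l_s}^{n_s-n_{s+1}}(\lm'-n_{s+1}^{(\l_s)})$ with the $k=s$ term of the product $\P_{k=s}^t C_{\l_k}^{n_k-n_{k+1}}((\lm',\ll_{s+1}^k)-n_{k+1}^{(\l_k)})$ appearing in the definition of $\KL^s$ (this is immediate since $\ll_{s+1}^s=\OO$), and also match the $k>s$ factors of the two expansions. The latter matching reduces to the concatenation identity $(\lm,\ll_{s+2}^k)=(\lm',\ll_{s+1}^k)$ for $\lm=(\lm',n_{s+1}^{(\l_{s+1}-\l_s)})$, which follows from $\ll_{s+1}^k=(n_{s+1}^{(\l_{s+1}-\l_s)},\ll_{s+2}^k)$. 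Once these identifications are verified, the analogue of the first-case identity, $\KL^{s+1}_{\lz_\Le}=\o{N_{c_s}(\lz_\Le)}^{n_s-n_{s+1}}\,N_{c_s}^{n_s-n_{s+1}}\,\KL^s_{\lz_\Le}$, holds on the open set of interest, and the limit $\Le\to 0$ gives the second claim.
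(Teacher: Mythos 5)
Your proposal is correct and follows essentially the same route as the paper's proof: Peter--Weyl expansion, elimination of components whose partitions are too long for the rank of $\lz_\Le$ (Lemma \ref{f}), application of the reduction formula \er{20} inside $E_{c_t}$ resp.\ $E_{c_s}$, the concatenation identities matching the residual series with definition \er{25}, and continuity in the limit $\Le\to0$. No substantive difference.
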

\begin{proof} Since $\lz_\Le$ has rank $\l_t,$ it follows that $\EL_{\lz_\Le}^\ll=0$ unless 
$\ll=(\lm,0^{(r-\l_t)}),$ where $\lm\in\Nl_+^{\l_t}$ satisfies $\lm\ge\ll_1^t.$ Therefore
$$\KL_{\lz_\Le}=\S_{\Nl_+^{\l_t}\ni\lm\ge\ll_1^t}a_{(\lm,0^{(r-\l_t)})}\ \EL_{\lz_\Le}^\lm.$$
Since $\lm\ge n_t^{(\l_t)},$ we may apply \er{20} to $E_{c_t}$ and $n=n_t.$ This yields
$$\f{\KL_{\lz_\Le}}{\o{N_{c_t}(\lz_\Le)}^{n_t}}
=\S_{\Nl_+^{\l_t}\ni\lm\ge\ll_1^t}a_{(\lm,0^{(r-\l_t)})}\ \f{\EL_{\lz_\Le}^\lm}{\o{N_{c_t}(\lz_\Le)}^{n_t}}
=N_{c_t}^{n_t}\S_{\Nl_+^{\l_t}\ni\lm\ge\ll_1^t}a_{(\lm,0^{(r-\l_t)})}\ C_{\l_t}^{n_t}(\lm)\ \EL_{\lz_\Le}^{\lm-n_t^{(\l_t)}}.$$
It follows that
$$\lim_{\Le\to0}\f{\KL_{\lz_\Le}}{\o{N_{c_t}(\lz_\Le)}^{n_t}}
=N_{c_t}^{n_t}\S_{\Nl_+^{\l_t}\ni\lm\ge\ll_1^t}a_{(\lm,0^{(r-\l_t)})}\ C_{\l_t}^{n_t}(\lm)\ \EL_\lz^{\lm-n_t^{(\l_t)}}
=N_{c_t}^{n_t}\ \KL_\lz^t.$$

Now let $s<t.$ Let $\lm\in\Nl_+^{\l_{s+1}}$ satisfy $\lm\ge\ll_1^{s+1}.$ Since $\x{rank}(\lz_\Le)=\l_s$ we have 
$\EL_{\lz_\Le}^{\lm-n_{s+1}^{(\l_{s+1})}}=0$ unless $\lm=(\ln,n_{s+1}^{(\l_{s+1}-\l_s)}),$ where $\ln\in\Nl_+^{\l_s}$ satisfies 
$\ln\ge\ll_1^s.$ Applying definition \er{25} to $\KL^{s+1}$ yields
$$\KL_{\lz_\Le}^{s+1}=\S_{\ll_1^{s+1}\le\lm\in\Nl_+^{\l_{s+1}}}a_{(\lm,\ll_{s+2}^t)}\ \EL_\lz^{\lm-n_{s+1}^{(\l_{s+1})}}
\P_{k=s+1}^t C_{\l_k}^{n_k-n_{k+1}}((\lm,\ll_{s+2}^k)-n_{k+1}^{(\l_k)})$$
$$=\S_{\ll_1^s\le\ln\in\Nl_+^{\l_s}}a_{(\ln,\ll_{s+1}^t)}\ \EL_\lz^{\ln-n_{s+1}^{(\l_s)}}
\P_{k=s+1}^t C_{\l_k}^{n_k-n_{k+1}}((\ln,\ll_{s+1}^k)-n_{k+1}^{(\l_k)}),$$
using $(\ln,n_{s+1}^{(\l_{s+1}-\l_s)},\ll_{s+2}^t)=(\ln,\ll_{s+1}^t)$ and 
$(\ln,n_{s+1}^{(\l_{s+1}-\l_s)})-n_{s+1}^{(\l_{s+1})}=\ln-n_{s+1}^{(\l_s)}.$ Since $\ln\ge(n_s-n_{s+1})^{(\l_s)},$ we may apply \er{20} to $E_{c_s}$ and $n=n_s-n_{s+1}.$ Since $\ln-n_{s+1}^{(\l_s)}-(n_s-n_{s+1})^{(\l_s)}=\ln-n_s^{(\l_s)}$ this yields
$$\f{\KL_{\lz_\Le}^{s+1}}{\o{N_{c_s}(\lz_\Le)}^{n_s-n_{s+1}}}=\S_{\ll_1^s\le\ln\in\Nl_+^{\l_s}}a_{(\ln,\ll_{s+1}^t)}
\ \f{\EL_{\lz_\Le}^{\ln-n_{s+1}^{(\l_s)}}}{\o{N_{c_s}(\lz_\Le)}^{n_s-n_{s+1}}}
\P_{k=s+1}^t C_{\l_k}^{n_k-n_{k+1}}((\ln,\ll_{s+1}^k)-n_{k+1}^{(\l_k)})$$
$$=N_{c_s}^{n_s-n_{s+1}}\S_{\ll_1^s\le\ln\in\Nl_+^{\l_s}}a_{(\ln,\ll_{s+1}^t)}
\ \EL_{\lz_\Le}^{\ln-n_s^{(\l_s)}}\ C_s^{n_s-n_{s+1}}(\ln-n_{s+1}^{(\l_s)})
\P_{k=s+1}^t C_{\l_k}^{n_k-n_{k+1}}((\ln,\ll_{s+1}^k)-n_{k+1}^{(\l_k)})$$
$$=N_{c_s}^{n_s-n_{s+1}}\S_{\ll_1^s\le\ln\in\Nl_+^{\l_s}}a_{(\ln,\ll_{s+1}^t)}\ \EL_{\lz_\Le}^{\ln-n_s^{(\l_s)}}
\P_{k=s}^t C_{\l_k}^{n_k-n_{k+1}}((\ln,\ll_{s+1}^k)-n_{k+1}^{(\l_k)}).$$
It follows that
$$\lim_{\Le\to0}\f{\KL_{\lz_\Le}^{s+1}}{\o{N_{c_s}(\lz_\Le)}^{n_s-n_{s+1}}}
=N_{c_s}^{n_s-n_{s+1}}\S_{\ll_1^s\le\ln\in\Nl_+^{\l_s}}a_{(\ln,\ll_{s+1}^t)}\ \EL_\lz^{\ln-n_s^{(\l_s)}}
\P_{k=s}^t C_{\l_k}^{n_k-n_{k+1}}((\ln,\ll_{s+1}^k)-n_{k+1}^{(\l_k)})$$
$$=N_{c_s}^{n_s-n_{s+1}}\ \KL_\lz^s.$$
\end{proof}

Let $1\le s\le t.$ For any tripotent chain 
$$c_s<c_{s+1}<\;<c_t$$
of rank $\l_s<\l_{s+1}<\;<\l_t$ consider the polynomial
$$N_{c_s,\;,c_t}^{n_s,\;,n_t}:=\P_{k=s}^t N_{c_k}^{n_k-n_{k+1}}=N_{c_s}^{n_s-n_{s+1}}N_{c_{s+1}}^{n_{s+1}-n_{s+2}}\:N_{c_{t-1}}^{n_{t-1}-n_t}N_{c_t}^{n_t}.$$

\begin{theorem}\label{h} Let $1\le s\le t.$ Then for any $\lz\in\lO_s$ we have 
$$H_\lz=\<N_{c_s,\;,c_t}^{n_s,\;,n_t}\.\KL_\lz^s:\ c_s<c_{s+1}<\;<c_t,\ \lz\in E_{c_s}\>$$
where the linear span is taken over all tripotent chains of rank $\l_s<\l_{s+1}<\;<\l_t$ such that $\lz\in E_{c_s}.$ In view of \er{21} we can write $N_{c_s,\;,c_t}^{n_s,\;,n_t}\.\KL_\lz^s$ also in the differential form
$N_{c_s,\;,c_t}^{n_s,\;,n_t}(\o\dl_\lz)\t\KL_\lz^s$ for some modified kernel functions $\t\KL_\lz^s.$ 
\end{theorem}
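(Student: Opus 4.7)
The strategy is to establish both inclusions using Proposition \ref{g}: each prescribed vector $N_{c_s,\;,c_t}^{n_s,\;,n_t}\.\KL_\lz^s$ arises as a cascade of normalized limits of reproducing kernels from the top stratum, while conversely, as the chain varies over all admissible tripotent chains these vectors exhaust the fibre.

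For the inclusion $\supseteq$, fix $\lz\in\lO_s$ and a chain $c_s<c_{s+1}<\;<c_t$ with $\lz\in E_{c_s}.$ Since tripotent nesting implies nested Peirce $2$-spaces $E_{c_s}\ic E_{c_{s+1}}\ic\;\ic E_{c_t}$, I would introduce a multi-parameter family $\lz_\Le=\lz_{\Le_s,\;,\Le_t}\in\c E_{c_t}$ of rank $\l_t$ that collapses, upon successively sending $\Le_t\to 0$, $\Le_{t-1}\to 0,\;,\Le_s\to 0$, to points of decreasing ranks $\l_{t-1},\l_{t-2},\;,\l_s$ lying in $E_{c_{t-1}},E_{c_{t-2}},\;,E_{c_s}$ respectively, and finally to $\lz$ itself. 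Applying Proposition \ref{g} at each stage transforms $\KL_{\lz_\Le}$, after division by the successive factors $\o{N_{c_t}(\cdot)}^{n_t}$, $\o{N_{c_{t-1}}(\cdot)}^{n_{t-1}-n_t}$, $\;$, $\o{N_{c_s}(\cdot)}^{n_s-n_{s+1}}$, into $N_{c_s,\;,c_t}^{n_s,\;,n_t}\.\KL_\lz^s$. Each intermediate vector lies in the fibre over its intermediate base point, and since the eigenbundle is closed in $D\xx H$, the final limit belongs to $H_\lz.$

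For the reverse inclusion, I would use $\h K$-homogeneity of the eigenbundle on each stratum $\c E_\l$ (with $\l_{s-1}\le\l<\l_s$) to reduce to the case $\lz=c$ for a single tripotent $c$ of each admissible rank $\l$, and then invoke the fibre isomorphism $J_c^\ll\ap\PL_W^{\ll^*}$ from \er{19} with $W=E^c.$ The condition $c\in E_{c_s}$ forces $c_k=c+c^{(s)}+\;+c^{(k)}$ with the $c^{(j)}$ mutually orthogonal tripotents in $W=E^c$ of ranks $\l_s-\l,\l_{s+1}-\l_s,\;,\l_t-\l_{t-1}$, so the chain data is equivalent to a partial frame in $W.$ I would then verify that the image of $N_{c_s,\;,c_t}^{n_s,\;,n_t}\.\KL_c^s$ under the isomorphism \er{19} matches, up to a nonzero scalar, the conical polynomial $N^{\ll^*}$ on $W$ built from this frame data; since these conical polynomials span $\PL_W^{\ll^*}$ as the frame varies (by highest-weight theory, cf.\ the argument in Lemma \ref{f}), the span exhausts $H_c.$ The main technical obstacle is this explicit translation through \er{19}: one must decompose each Jordan minor $N_{c_k}$ under the Peirce splitting at $c$, track the normal projection $\lp_c$ acting on the product $N_{c_s,\;,c_t}^{n_s,\;,n_t}\.\KL_c^s$, and identify the outcome with the correct conical polynomial on $W.$ The forward direction, by contrast, is essentially bookkeeping in iterating Proposition \ref{g}.
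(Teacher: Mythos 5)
Your strategy mirrors the paper's almost exactly: Proposition \ref{g} drives the forward inclusion and the isomorphism \er{19} together with conical polynomials drives the reverse. Two small remarks. First, the paper organizes the forward inclusion as a downward induction on $s$ — establish $N_{c_{s+1},\dots,c_t}^{n_{s+1},\dots,n_t}\KL_{\lz_\Le}^{s+1}\in H_{\lz_\Le}$ first, then take a single limit $\lz_\Le\to\lz$ along $\c E_{c_s}$ — rather than a simultaneous multi-parameter collapse; these are equivalent in substance but the one-step induction is cleaner to justify, since at each stage Proposition \ref{g} is applied only once and the "continuity" closedness of $\bigcup_\lz\{\lz\}\times H_\lz$ is invoked just once per step. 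Second, for the reverse inclusion the paper sidesteps your stated "main technical obstacle" of pushing $N_{c_s,\dots,c_t}^{n_s,\dots,n_t}\.\KL_c^s$ through \er{19}: it notes that multiplication by $\KL_c^s$ is \emph{injective} on the polynomial span $\lS$ (since $\KL_c^s\ne0$ by the vanishing lemma), that $\lp_c N_{c_s,\dots,c_t}^{n_s,\dots,n_t}=\t N_{c_s-c,\dots,c_t-c}^{n_s,\dots,n_t}$ for chains with $c<c_s$, and that these conical polynomials span $\PL_W^{\ll^*}$; this yields $\dim(\lS\.\KL_c^s)=\dim\lS\ge\dim\PL_W^{\ll^*}=\dim H_c$ without ever computing the isomorphism explicitly. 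Finally, be careful with the claim that $c\in E_{c_s}$ "forces" $c_s=c+c^{(s)}$ with $c^{(s)}$ a tripotent orthogonal to $c$: a rank-$\l$ tripotent lying in $E_{c_s}^2$ need not satisfy $c<c_s$ in the tripotent order. This does not invalidate the argument, because the dimension count already succeeds when restricted to the sub-collection of chains with $c<c_s$ (as the paper does), while part one guarantees the span over the larger collection still lands inside $H_c$.
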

\begin{proof} We first show that $N_{c_s,\;,c_t}^{n_s,\;,n_t}\.\KL_\lz^s\in H_\lz$ for all $\lz\in E_{c_s}.$ In case $s=t$ let $c_t\in S_{\l_t}$ and $\lz\in E_{c_t}.$ Writing $\lz=\lim_{\Le\to0}\lz_\Le$ for some curve 
$\lz_\Le\in\c E_{c_t}$ we have
$$\f{\KL_{\lz_\Le}}{\o{N_{c_t}(\lz_\Le)}^{n_t}}\xrightarrow[\Le\to0]{}N_{c_t}^{n_t}\ \KL_\lz^t$$
by Proposition \ref{g}. Since $\KL_{\lz_\Le}\in H_{\lz_\Le}$ a continuity argument yields $N_{c_t}^{n_t}\ \KL_\lz^t\in H_\lz.$
Now let $s<t$ and $\lz\in E_{c_s}.$ Writing $\lz=\lim_{\Le\to0}\lz_\Le$ for some curve $\lz_\Le\in\c E_{c_s}$ we have
$$\f{\KL_{\lz_\Le}^{s+1}}{\o{N_{c_s}(\lz_\Le)}^{n_s-n_{s+1}}}\xrightarrow[\Le\to0]{}N_{c_s}^{n_s-n_{s+1}}\ \KL_\lz^s$$
by Proposition \ref{g}. Hence
$$\f{N_{c_{s+1},\;,c_t}^{n_{s+1},\;,n_t}\KL_{\lz_\Le}^{s+1}}{\o{N_{c_s}(\lz_\Le)}^{n_s-n_{s+1}}}
\xrightarrow[\Le\to0]{}N_{c_{s+1},\;,c_t}^{n_{s+1},\;,n_t}N_{c_s}^{n_s-n_{s+1}}\ \KL_\lz^s
=N_{c_s,\;,c_t}^{n_s,\;,n_t}\ \KL_\lz^s.$$
By (downward) induction we may assume $N_{c_{s+1},\;,c_t}^{n_{s+1},\;,n_t}\KL_{\lz_\Le}^{s+1}\in H_{\lz_\Le}$ since 
$\lz_\Le\in E_{c_s}\ic E_{c_{s+1}}.$ A continuity argument shows that $N_{c_s,\;,c_t}^{n_s,\;,n_t}\ \KL_\lz^s\in H_\lz,$ concluding the induction step.

Now let $\lz\in\lO_s$ so that $\l_{s-1}\le\l=\x{rank}(\lz)<\l_s.$ Consider all tripotent chains $c_s<c_{s+1}<\;<c_t$ of rank 
$\l_s<\l_{s+1}<\;<\l_t$ such that $\lz\in E_{c_s}$ and denote by $\lS\ic\PL_E$ the linear span of the associated polynomials $N_{c_s,\;,c_t}^{n_s,\;,n_t}.$ Since the kernel $\KL_\lz^s$ does not vanish by Lemma \ref{g} multiplication by $\KL_\lz^s$ is an injective map $\lS\to H_\lz.$ To show surjectivity, assume first that $\lz=c$ is a tripotent such that $c<c_s.$ Then
$$\lp_c N_{c_s,\;,c_t}^{n_s,\;,n_t}=\t N_{c_s-c,\;,c_t-c}^{n_s,\;,n_t}$$
where $c_s-c<\;<c_t-c$ is a tripotent chain of rank $\l_s-\l<\;<\l_t-\l$ in $W=E^c$ and $\t N$ denotes the conical polynomials relative to $W.$ Since the polynomials $\t N_{c_s-c,\;,c_t-c}^{n_s,\;,n_t}$ span $\PL_W^{\ll^*}$ it follows that
$$\dim\lS\ge\dim\PL_W^{\ll^*}=\dim J_c^\ll=\dim H_c.$$
Here we use \er{19} for the second equation. Thus it follows that $H_c=\lS\.\KL_c^s.$ In the general case let $\lz\in E_{c_s}$ have rank $\l<\l_s.$ Then there exists $h\in\h K$ such that $\lz=h^*c,$ where $c\in E_{c_s}$ is a tripotent of rank $\l$ with 
$c<c_s.$ Moreover, we may assume that $h E_{c_j}=E_{c_j}$ for all $s\le j\le t.$ By semi-invariance we have 
$N_{c_j}\oc h^{-1}=a_j\ N_{c_j}$ for some constant $a_j.$ This implies
$$N_{c_s,\;,c_t}^{n_s,\;,n_t}\oc h^{-1}=\P_{j=s}^t a_j^{n_j-n_{j+1}}\ N_{c_s,\;,c_t}^{n_s,\;,n_t}.$$
On the other hand, the $K$-invariance of $\KL^s$ implies $\KL_\lz^s=\KL_{h^*c}^s=\KL_c^s\oc h.$ It follows that 
$$N_{c_s,\;,c_t}^{n_s,\;,n_t}\ \KL_\lz^s=\((N_{c_s,\;,c_t}^{n_s,\;,n_t}\oc h^{-1})\ \KL_c^s\)\oc h
=\P_{j=s}^t a_j^{n_j-n_{j+1}}(N_{c_s,\;,c_t}^{n_s,\;,n_t}\ \KL_c^s)\oc h.$$
Using $H_\lz=H_{h^*c}=H_c\oc h$ the assertion follows in the general case.
\end{proof}

If $\l_{s-1}\le\l<\l_s$ and $\ll=(\ll_1,\;,\ll_r)=(n_1^{(\l_1-\l_2)},\;,n_t^{(\l_t)},0^{(r-\l_t)}),$ we put 
$$\ll^*=(\ll_{\l+1},\;,\ll_r)=(n_s^{(\l_s-\l)},n_{s+1}^{(\l_{s+1}-\l_s)}\;,n_t^{(\l_t)},0^{(r-\l_t)})$$
and $\h\ll^*=(\ll_{\l+1}^{(\l)},\ll^*).$ For any tripotent $c$ of rank $\l,$ with Peirce 0-space $W=E^c,$ there exists a cross-section \cite{U3} $\Lt_{\h\ll^*}:\PL_W^{\ll^*}\to\PL_E^{\h\ll^*}$ satisfying
$$\Lt_{\h\ll^*}\t N_{c_s-c,\;,c_t-c}^{n_s,\;,n_t}=N_{c_s,\;,c_t}^{n_s,\;,n_t}$$
for all tripotent chains $c_s<\;<c_t$ of rank $\l_s<\;<\l_t$ such that $c<c_s.$ Thus Theorem \ref{h} and its proof yield the 
"embedding theorem"

\begin{theorem} For each tripotent $c\in\lO_s$ there is an isomorphism 
$$\PL_W^{\ll^*}\xrightarrow[\ap]{\lT_c}H_c\ic H$$ 
mapping $\lf\in\PL_W^{\ll^*}$ to $(\Lt_{\h\ll^*}\lf)\.\KL_c^s\in H_c.$
\end{theorem}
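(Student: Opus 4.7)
The plan is to deduce the embedding theorem directly from Theorem \ref{h}, using the cross-section $\Lt_{\h\ll^*}$ to convert the intrinsic labelling of the spanning set by $W$-data into the ambient $E$-data that appears in that theorem. No new analytic input beyond what has already gone into Theorem \ref{h} is needed; the argument is essentially bookkeeping plus a dimension count.

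First, I would verify that $\lT_c$ lands in $H_c.$ The space $\PL_W^{\ll^*}$ is linearly spanned by the normal conical polynomials $\t N^{n_s,\;,n_t}_{c_s-c,\;,c_t-c}$ as $c_s<\;<c_t$ runs over tripotent chains in $E$ of ranks $\l_s<\;<\l_t$ satisfying $c<c_s.$ The defining relation $\Lt_{\h\ll^*}\t N^{n_s,\;,n_t}_{c_s-c,\;,c_t-c}=N^{n_s,\;,n_t}_{c_s,\;,c_t}$ of the cross-section shows that $\lT_c$ sends these basis vectors to the products $N^{n_s,\;,n_t}_{c_s,\;,c_t}\.\KL_c^s,$ which belong to $H_c$ by Theorem \ref{h}. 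By linearity, $\lT_c(\PL_W^{\ll^*})\ie H_c,$ and Theorem \ref{h} further asserts that these images span $H_c,$ giving surjectivity of $\lT_c$ onto $H_c.$

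For injectivity I would argue that $\Lt_{\h\ll^*}$ is injective by construction as a cross-section, while $\KL_c^s$ is a holomorphic function on $D$ that does not vanish identically, since $c\in\lO_s$ has rank $\ge\l_{s-1}$ and so the lemma preceding Proposition \ref{g} applies. Hence multiplication by $\KL_c^s$ is injective on polynomials, and $\lT_c$ is the composition of two injective linear maps. As a cross-check, the dimension identity
$$\dim\PL_W^{\ll^*}=\dim J_c^\ll=\dim H_c$$
provided by \er{19} (applied to $c$ with Peirce 0-space $W=E^c$) confirms that our surjection between finite-dimensional spaces of equal dimension is automatically bijective.

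The real obstacle is therefore not in the present deduction but in the ingredients feeding into it: the limiting argument of Proposition \ref{g} that places the products $N^{n_s,\;,n_t}_{c_s,\;,c_t}\.\KL_c^s$ in $H_c,$ the downward induction on the stratum index $s$ used in Theorem \ref{h} to reach an arbitrary chain, and the combinatorial matching between the two partitions $\ll^*\in\Nl_+^{r-\l}$ and $\h\ll^*\in\Nl_+^r$ that underlies the existence of $\Lt_{\h\ll^*}.$ Once those are granted, the present theorem is purely organizational: the cross-section $\Lt_{\h\ll^*}$ supplies a canonical parametrization of the chain-indexed spanning family by $\PL_W^{\ll^*},$ upgrading the abstract isomorphism \er{19} into the explicit kernel-theoretic embedding of $H_c$ into $H.$
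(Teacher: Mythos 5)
Your proposal is correct and takes essentially the same route the paper does: the paper itself gives no separate proof, stating only that the result "follows from Theorem \ref{h} and its proof," and your argument fills in exactly that reasoning — transporting the spanning and injectivity statements from Theorem \ref{h} through the cross-section $\Lt_{\h\ll^*},$ using the pre-Proposition-\ref{g} lemma to make multiplication by $\KL_c^s$ injective, and checking consistency with the dimension identity from \er{19}.
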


Beyond the tripotents, for any $\lz\in\lO_s$ we define the embedding $J_\lz^\ll\xrightarrow[\ap]{\lT_\lz}H_\lz$ via the commuting diagram
$$\xymatrix{J_c^\ll\ar[r]^{\lT_c}\ar[d]_{\oc h^{-1}}&H_c\ar[d]^{\oc h^*}\\
J_\lz^\ll\ar[r]_{\lT_\lz}&H_\lz}$$
where $h\in\h K$ satisfies $\lz=hc.$

\section{Geometric Realization} 
The famous results of Kodaira \cite{MK} show that a compact K\"ahler manifold $M,$ whose K\"ahler form $\lo$ satisfies an integrality condition (a so-called Hodge manifold) can be embedded as an algebraic subvariety of a projective space $\Pl(V).$ The underlying finite-dimensional vector space $V$ (more precisely its linear dual space) has a geometric realization in terms of holomorphic sections of a suitable line bundle $\LL$ over $M.$ For homogeneous compact K\"ahler manifolds, the  Borel-Weil-Bott theorem \cite{Ac} gives a similar construction for the irreducible representations of compact semisimple Lie groups. In this section we show that such a "geometric" description also holds for the fibres of the localization bundle $J_E^\ll,$ for any 
"partition" ideal $J^\ll.$ While this realization, based on the complex geometry of Peirce 2-spaces, still needs Jordan theory, the underlying idea, using flags of certain subspaces ("slices") of $E,$ may be susceptible to generalization.

The collection $M_\l$ of all Peirce 2-spaces $U=E_c,$ for all tripotents $c\in S_\l$ of fixed rank $\l,$ is a compact K\"ahler manifold called the $\l$-th {\bf Peirce manifold}. In the matrix case $E=\Cl^{r\xx s}$ and $0\le\l\le r$ we may identify 
$M_\l\ap\x{Grass}_\l(\Cl^r)\xx\x{Grass}_\l(\Cl^s)$ as a product of Grassmann manifolds.

The structure group $\h K$ acts transitively on each $M_\l$ by holomorphic transformations $(h,U)\qi hU,$ and the restricted action of $K$ is already transitive. It follows that
$$M_\l=\h K/\h K^{E_c}=K/K^{E_c}$$
where $\h K^{E_c}:=\{\lg\in\h K:\ \lg E_c=E_c\}$ is the closed complex subgroup fixing a given Peirce 2-space $E_c.$

\begin{lemma}\label{i} If $\lg\in\h K^{E_c}$ then the minor $N_c$ satisfies
$$N_c\oc\lg^*=\o{\lc_\l(\lg)}\ N_c$$
where
$$\lc_\l(\lg):=\lD_c(\lg c)$$
is a holomorphic character of $\h K^{E_c}.$
\end{lemma}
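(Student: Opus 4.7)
The plan is to reduce the identity $N_c\oc\lg^*=\o{\lc_\l(\lg)}N_c$ to the semi-invariance \er{12} of the determinant on the unital Jordan algebra $E_c,$ after understanding how $\lg^*$ interacts with the Peirce-$2$ projection $P_c,$ and then to match the resulting constant with $\o{\lc_\l(\lg)}$ by comparing two characters of the parabolic subgroup $\h K^{E_c}.$ The main subtlety is that $\lg^*$ in general does \emph{not} lie in $\h K^{E_c}$ (the parabolic is not $*$-stable, as one sees already in $E=\Cl^{r\xx s}$ by the block-upper-triangular description), so \er{12} cannot be applied directly to $\lg^*.$

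\medskip

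The hypothesis $\lg E_c=E_c$ says $\lg P_c=P_c\lg P_c;$ taking adjoints in the $K$-invariant inner product on $E$ (relative to which the Peirce decomposition is orthogonal, so $P_c^*=P_c$) gives $P_c\lg^*=P_c\lg^*P_c.$ Thus $P_c\lg^*z=Q(P_cz)$ for every $z\in E,$ where $Q:=P_c\lg^*|_{E_c}:E_c\to E_c$ is readily identified with the $E_c$-adjoint $(\lg|_{E_c})^*.$ Since the structure group $\x{Str}(E_c)$ is stable under its Cartan involution (i.e., the $E_c$-adjoint) and contains $\lg|_{E_c},$ it also contains $Q,$ so the semi-invariance \er{12} applied within $E_c$ yields
$$N_c(\lg^*z)=\lD_c(Q P_cz)=\lD_c(Qc)\,\lD_c(P_cz)=\lD_c(Qc)\,N_c(z).$$

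\medskip

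It remains to show $\lD_c(Qc)=\o{\lc_\l(\lg)}.$ One checks that $\lg\qi Q(\lg)$ is an anti-holomorphic anti-homomorphism $\h K^{E_c}\to\x{Str}(E_c),$ so both $\lg\qi\lD_c(Qc)$ and $\lg\qi\o{\lD_c(\lg c)}$ are anti-holomorphic characters of $\h K^{E_c}.$ Any character of the parabolic $\h K^{E_c}$ is trivial on its unipotent radical (since $\Cl^*$ contains no nontrivial unipotents) and therefore factors through the Levi subgroup $L,$ whose compact real form is $K^{E_c}=K\ui\h K^{E_c}.$ As $K^{E_c}$ is Zariski-dense in $L,$ the two characters coincide as soon as they agree on $K^{E_c}.$ For $k\in K^{E_c}$ one has $k^*=k^{-1},$ so $Qc=k^{-1}c;$ applying \er{12} to $c=k\.k^{-1}c$ yields $\lD_c(kc)\,\lD_c(k^{-1}c)=1,$ and since $\lc_\l|_{K^{E_c}}$ is a continuous character of a compact group, its image lies in $\Tl,$ giving $\lD_c(k^{-1}c)=\o{\lD_c(kc)},$ as required.

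\medskip

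Finally, $\lc_\l$ is holomorphic because $\lg\qi\lg c$ is holomorphic $\h K^{E_c}\to E_c$ and $\lD_c$ is polynomial; and it is multiplicative because, for $\lg_1,\lg_2\in\h K^{E_c},$ the element $\lg_2 c$ lies in $E_c,$ so \er{12} applied to $\lg_1|_{E_c}\in\x{Str}(E_c)$ with $w=\lg_2 c$ gives $\lc_\l(\lg_1\lg_2)=\lD_c(\lg_1\lg_2 c)=\lD_c(\lg_1 c)\,\lD_c(\lg_2 c)=\lc_\l(\lg_1)\,\lc_\l(\lg_2).$
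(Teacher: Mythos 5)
Your proof is correct and follows the same strategy as the paper: use $P_c\lg^* = P_c\lg^*P_c$ to reduce $N_c\circ\lg^*$ to the semi-invariance \eqref{12} of $\lD_c$ on $E_c$, then match the resulting constant with $\o{\lc_\l(\lg)}$. You supply a careful justification (comparison of two anti-holomorphic characters of the parabolic, reduced to the compact $K^{E_c}$ by Zariski density) for the key identity $\lD_c(Qc)=\o{\lD_c(\lg c)}$, which the paper's proof asserts in a single step $\lD_c\big((P_c\lg P_c)^*c\big)=\o{\lD_c(P_c\lg P_c\,c)}$ without elaboration.
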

\begin{proof} Since $\lg\in\h K^{E_c}$ preserves $E_c$ the Peirce 2-projection $P_c$ satisfies $\lg P_c=P_c\lg P_c.$ Therefore 
$P_c\lg^*=P_c\lg^*P_c.$ Since $P_c\lg P_c\in\h K_{E_c}$ the semi-invariance of $\lD_c$ implies
$$N_c(\lg^*z)=\lD_c(P_c\lg^*z)=\lD_c(P_c\lg^*P_cz)=\lD_c(P_c\lg^*c)\ \lD_c(P_cz)=\o{\lD_c(\lg c)}\ N_c(z).$$
Here we use
$$\lD_c(P_c\lg^*c)=\lD_c(P_c\lg^*P_c c)=\lD_c((P_c\lg P_c)^*c)=\o{\lD_c(P_c\lg P_c c)}=\o{\lD_c(\lg c)}.$$
\end{proof}

As a consequence we obtain a {\bf holomorphic homogeneous line bundle} 
\be{27}\LL_\l:=\{[h,\lx]=[h\lg,\lc_\l(\lg)\lx]:\ h\in\h K,\ \lg\in\h K^{E_c},\ \lx\in\Cl\}\ee
over $M_\l,$ with projection $[h,\lx]\mapsto hE_c.$ By definition, the holomorphic sections $\ls:M_\l\to\LL_\l$ have the form
$$\ls_{hE_c}=[h,\t\ls(h)]$$
where the "homogeneous lift" $\t\ls:\h K\to\Cl$ is a holomorphic map satisfying
$$\t\ls(h\lg)=\lc_\l(\lg)\ \t\ls(h)$$
for all $h\in\h K,\ \lg\in\h K^{E_c}.$

Let $\ll\in\Nl_+^r$ be a partition written in the form \er{24}. The {\bf Peirce flag manifold} $M_{\l_1,\;,\l_t}$ consists of all chains of Peirce 2-spaces $U_1\ic U_2\ic \;\ic U_t$ such that $\x{rank}(U_s)=\l_s$ for all $s\le t.$ The group $\h K$ acts on $M_{\l_1,\;,\l_t}$ by holomorphic transformations $(h,(U_1,\;,U_t))\qi(hU_1,\;,hU_t)$ and the restriction to $K$ is already transitive. Thus
$$M_{\l_1,\;,\l_t}=\h K/\h K^{E_{c_1},\;,E_{c_t}}=K/K^{E_{c_1},\;,E_{c_t}}$$
where $\h K^{E_{c_1},\;,E_{c_t}}:=\{\lg\in\h K:\ \lg E_{c_s}=E_{c_s}\ \forall\ 1\le s\le t\}$ is the closed complex subgroup fixing the Peirce 2-flag $E_{c_1}\ic\;\ic E_{c_t}$ for a chain of tripotents $c_1<\;<c_t$ of rank $\l_1<\;<\l_t.$ We may choose 
$c_s:=e_{[\l_s]}.$ As a consequence of Lemma \ref{i} the conical function 
$$N^\ll:=\P_{s=1}^t N_{c_s}^{n_s-n_{s+1}}=N_{c_1}^{n_1-n_2}N_{c_2}^{n_2-n_3}\:N_{c_t}^{n_t}$$ 
satisfies
\be{40}N^\ll\oc\lg^*=\o{\lc^\ll(\lg)}\ N^\ll\ee
for all $\lg\in\h K^{E_{c_1},\;,E_{c_t}},$ where
$$\lc^\ll(\lg):=\P_{s=1}^t\lD_{c_s}(\lg c_s)^{n_s-n_{s+1}}$$
is a holomorphic character of $\h K^{E_{c_1},\;,E_{c_t}}.$ As a consequence we obtain a holomorphic homogeneous line bundle 
\be{28}\LL^\ll:=\{[h,\lx]=[h\lg,\lc^\ll(\lg)\lx]:\ h\in\h K,\ \lg\in\h K^{E_{c_1},\;,E_{c_t}},\ \lx\in\Cl\}\ee
over $M_{\l_1,\;,\l_t},$ with projection $[h,\lx]\mapsto(hE_{c_1},\;,hE_{c_t}).$ By definition, the holomorphic sections 
$\ls:M_{\l_1,\;,\l_t}\to\LL^\ll$ have the form
$$\ls_{hE_{c_1},\;,hE_{c_t}}=[h,\t\ls(h)]$$
where the homogeneous lift $\t\ls:\h K\to\Cl$ is a holomorphic map satisfying
$$\t\ls(h\lg)=\lc^\ll(\lg)\ \t\ls(h)$$
for all $h\in\h K$ and $\lg\in\h K^{E_{c_1},\;,E_{c_t}}.$ For $t=1$ we obtain the powers $\LL_\l^n$ of the line bundle \er{27}. 

We first describe the maximal fibre $J_0^\ll=\PL_E^\ll$ at $\lz=0.$

\begin{theorem}\label{j} Any $p\in\PL_E^\ll$ defines a holomorphic section $\ls^p:M_{\l_1,\;,\l_t}\to\LL^\ll,$ whose homogeneous lift $\t\ls^p:\h K\to\Cl$ is given by the Fischer-Fock inner product
\be{29}\t\ls^p(h)=(N^\ll\oc h^*|p)=(N^\ll|p\oc h).\ee 
This yields a $\h K$-equivariant isomorphism
$$\PL_E^\ll\xrightarrow[\ap]{\lT}\lG(M_{\l_1,\;,\l_t},\LL^\ll),\ p\qi\ls^p.$$
\end{theorem}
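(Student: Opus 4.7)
The plan is to proceed in three stages: verify that formula \er{29} genuinely defines a holomorphic section of $\LL^\ll,$ then check that $\lT$ is $\h K$-equivariant and non-zero, and finally conclude via Borel--Weil that $\lT$ is an isomorphism.

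First I would observe that the two expressions in \er{29} agree by the Fischer--Fock adjoint identity $(p\oc A|q)=(p|q\oc A^*)$ applied with $A=h^*,$ and that each is manifestly holomorphic in $h$: in the form $(N^\ll|p\oc h),$ the argument $p\oc h$ is polynomial in the entries of $h$ and the pairing is linear in the second slot. The transformation law $\t\ls^p(h\lg)=\lc^\ll(\lg)\,\t\ls^p(h)$ required to descend $\t\ls^p$ to a section of $\LL^\ll$ follows from the left-hand form: using $(h\lg)^*=\lg^*h^*$ to write $\t\ls^p(h\lg)=(N^\ll\oc\lg^*\oc h^*|p)$ and invoking the semi-invariance \er{40} as $N^\ll\oc\lg^*=\o{\lc^\ll(\lg)}\,N^\ll,$ the scalar pulls out of the anti-linear first slot as $\lc^\ll(\lg).$

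Next, for $\h K$-equivariance, I would note that $(k\.p)\oc h=p\oc(k^{-1}h)$ (from $(k\.p)(z)=p(k^{-1}z)$), which immediately gives $\t\ls^{k\.p}(h)=\t\ls^p(k^{-1}h);$ this is the translation action of $k$ on homogeneous lifts, so $\lT$ intertwines, and holomorphic extension in $k$ upgrades this to $\h K$-equivariance. Non-vanishing of $\lT$ is visible at $h=1$: $\t\ls^{N^\ll}(1)=(N^\ll|N^\ll)>0.$ Hence $\ker\lT$ is a proper $K$-submodule of the irreducible module $\PL_E^\ll$ and must vanish, proving injectivity.

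The main obstacle will be surjectivity, for which I would invoke Borel--Weil on the compact homogeneous K\"ahler manifold $M_{\l_1,\;,\l_t}=K/K^{E_{c_1},\;,E_{c_t}}.$ The character $\lc^\ll$ is $\P_s(\lg\qi\lD_{c_s}(\lg c_s))^{n_s-n_{s+1}}$ with positive exponents; its differential at the identity is the highest weight of $\PL_E^\ll$ in the Jordan-theoretic ordering and $N^\ll$ is the corresponding highest weight vector, as \er{40} reconfirms. Borel--Weil then identifies $\lG(M_{\l_1,\;,\l_t},\LL^\ll)$ with the irreducible $K$-module of highest weight $\ll,$ whose dimension matches $\dim\PL_E^\ll.$ The injective $\h K$-equivariant map $\lT$ is therefore forced to be a bijection.
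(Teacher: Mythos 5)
Your proposal is correct and follows essentially the same route as the paper: the transformation law via the semi-invariance \er{40}, equivariance by the translation computation, injectivity from irreducibility of $\PL_E^\ll$ (the paper phrases this as $\PL_E^\ll$ being spanned by the $N^\ll\oc h^*$, you via a Schur-type kernel argument), and surjectivity from Borel--Weil(--Bott) irreducibility of the section space.
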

\begin{proof} To check the homogeneity condition, let $\lg\in\h K^{E_{c_1},\;,E_{c_t}}.$ Then \er{40} implies
$$\t\ls^p(h\lg)=(N^\ll\oc(h\lg)^*|p)=(N^\ll\oc\lg^*\oc h^*|p)$$
$$=(\o{\lc^\ll(\lg)}\ N^\ll\oc h^*|p)=\lc^\ll(\lg)\ (N^\ll\oc h^*|p)=\lc^\ll(\lg)\ \t\ls^p(h).$$
Since $\PL_E^\ll$ is the linear span of polynomials $N^\ll\oc h^*$ the map $\lT$ is injective. For any $h,h'\in\h K$ we have
$$\t\ls^{p\oc h}(h')=(N^\ll|(p\oc h)\oc h')=(N^\ll|(p\oc(hh'))=\t\ls^p(hh')=\w{(h^{-1}\.\ls^p)}(h'),$$
proving invariance
$$\ls^{p\oc h}=h^{-1}\.\ls^p$$
under $h\in\h K.$ Realizing $M_{\l_1,\;,\l_t}=\h K/\h K^{E_{c_1},\;,E_{c_t}}$ as a Lie theoretic flag manifold \cite{AU} the Borel-Weil-Bott theorem shows that $\lG(M_{\l_1,\;,\l_t},\LL^\ll)$ is $\h K$-irreducible. Thus the map $\lT$ is surjective and hence an isomorphism.
\end{proof}

As an example, consider the partitions
$$n^{(\l)}:=(n,\;,n,0,\;,0)$$
with $n>0$ repeated $\l$ times. Here $t=1$ and one obtains an isomorphism (cf. \cite{AU})
$$\PL_E^{n^{(\l)}}\ap\lG(M_\l,\LL_\l^n),$$
where $\LL_\l^n$ is the $n$-th tensor power of the line bundle $\LL_\l$ defined by \er{27}. If $\l=r$ and $E$ is unital with unit element $e,$ then $M_r=\{E\}$ is a singleton and $\PL_E^{n^{(r)}}$ is 1-dimensional, spanned by $N_e^n.$ If $\l$ is arbitrary and $n=1$ we have the "fundamental" partitions and 
$$\PL_E^{1^{(\l)}}\ap\lG(M_\l,\LL_\l).$$ 
In the simplest case $\l=1$ we obtain a kind of projective space $M_1$ consisting of all Peirce 2-spaces of rank 1, or equivalently, all lines spanned by tripotents of rank 1. The associated "tautological" line bundle
$$\TL=\U_{U\in M_1}U$$
has no holomorphic sections. On the other hand each $f\in E^*$ yields a holomorphic section $\ls^f:M\to\TL^*$ of the dual line bundle
$$\TL^*=\U_{U\in M_1}U^*$$
via
$$M_1\ni U\mapsto\ls^f_U:=f|_U\in\TL_U^*.$$
The calculation
$$\ls_{hU}^{f\oc h^{-1}}=(f\oc h^{-1})|_{hU}=(f|_U)\oc h^{-1}=\ls_U^f\oc h^{-1}=(h\.\ls^f)_{hU}$$
shows that $h\.\ls^f=\ls^{f\oc h^{-1}}.$ The homogeneous lift $\t\ls^f:\h K\to\Cl$ is given by
$$\t\ls^f(h):=f(he_1).$$
If $\lg\in\h K^{E_{e_1}}$ then $\lg e_1=\lc_1(\lg)e_1$ for the holomorphic character $\lc_1:\h K^{E_{e_1}}\to\Cl^\xx$ given by 
$\lc_1(\lg)=(\lg e_1|e_1)=N_{e_1}(\lg e_1).$ This is covered by the general formula \er{29}:

\begin{lemma} For each $f\in E^*=\PL_E^{1,0,\;,0}$ we have 
$$\t\ls^f(h)=(N_{e_1}\oc h^*|f)$$
\end{lemma}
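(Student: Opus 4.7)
The plan is to recognize the lemma as the specialization of Theorem \ref{j} to the fundamental partition $\ll=(1,0,\;,0)$, so that the task reduces to reconciling the ad hoc formula $\t\ls^f(h)=f(he_1)$ introduced just before the lemma with the general formula (\ref{29}), where $N^\ll=N_{e_1}$ in this case.

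First, I would identify $N_{e_1}$ explicitly as a linear functional on $E$. Since $e_1$ has rank $1$, its Peirce $2$-space $E_{e_1}=\Cl e_1$ is one-dimensional, the Jordan determinant $\lD_{e_1}$ on this line is the linear map $\la e_1\qi\la$, and the Peirce $2$-projection reads $P_{e_1}z=(z|e_1)\,e_1$ relative to the $K$-invariant Hermitian form on $E$. Therefore
$$N_{e_1}(z)=\lD_{e_1}(P_{e_1}z)=(z|e_1),$$
as already recorded in the paragraph preceding the lemma via the identification $\lc_1(\lg)=(\lg e_1|e_1)=N_{e_1}(\lg e_1)$. Thus $N_{e_1}$ is precisely the reproducing kernel vector $\EL_{e_1}^{1,0,\;,0}$ of the one-dimensional $K$-type $E^*=\PL_E^{1,0,\;,0}$ at $e_1$.

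Next, using that $h^*$ is the Hermitian adjoint of $h$, I would rewrite
$$(N_{e_1}\oc h^*)(z)=(h^*z|e_1)=(z|he_1)=\EL_{he_1}^{1,0,\;,0}(z).$$
Finally, I would invoke the reproducing property of the Fischer--Fock inner product on $E^*$: for any $w\in E$ and $f\in E^*$ one has $(\EL_w^{1,0,\;,0}|f)=f(w)$. Specializing at $w=he_1$ yields
$$(N_{e_1}\oc h^*|f)=(\EL_{he_1}^{1,0,\;,0}|f)=f(he_1)=\t\ls^f(h),$$
which is the claimed identity.

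The only subtlety is in step one, namely pinning down the normalizations so that the Jordan determinant on the one-dimensional unital Jordan algebra $\Cl e_1$ coincides with the coefficient relative to $e_1$ under the Fischer--Fock-compatible Hermitian form on $E$. Once that compatibility is in place, the remaining steps are purely formal, and the lemma appears as the expected specialization of Theorem \ref{j} at the fundamental partition $(1,0,\;,0)$.
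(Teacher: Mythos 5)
Your proposal is correct and follows essentially the same route as the paper: the paper likewise computes $(N_{e_1}\oc h^*)(z)=(h^*z|e_1)=(z|he_1)$ and then evaluates the Fischer--Fock pairing by writing $f=(\.|v)$, so that $((\.|he_1)|(\.|v))=(he_1|v)=f(he_1)$, which is exactly your invocation of the reproducing property of $\EL^{1,0,\;,0}$. Your extra care about the identification $N_{e_1}(z)=(z|e_1)$ is a harmless elaboration of what the paper takes for granted in the paragraph preceding the lemma.
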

\begin{proof} Putting $fz:=(z|v)$ the relation
$$(N_{e_1}\oc h^*)(z)=N_{e_1}(h^*z)=(h^*z|e_1)=(z|he_1)$$
implies
$$(N_{e_1}\oc h^*|f)=((\.|he_1)|(\.|v))=(he_1|v)=f(he_1).$$
\end{proof}

In order to describe the localization $J_\lz^\ll$ at non-zero points $\lz$ of rank $\l,$ where $\l_{s-1}\le\l<\l_s,$ we pass to certain submanifolds of the Peirce flag manifold. Consider the "incidence space"
$$M_{\l_s,\;,\l_t}^\lz:=\{(U_s,\;,U_t)\in M_{\l_s,\;,\l_t}:\ \lz\in U_s\}$$
and the "polar space"
$$M_{\l_s-\l,\;,\l_t-\l}^{\lz*}:=\{(W_s,\;,W_t)\in M_{\l_s-\l,\;,\l_t-\l}:\ W_t\b\lz^*=0\}.$$
For a tripotent $\lz=c$ this means that the flag is contained in $W=E^c.$ These compact submanifolds are closely related: If 
$(U_s,\;U_t)\in M_{\l_s,\;,\l_t}^\lz$ then the Peirce 0-spaces $W_k:=U_k^c$ form a flag 
$(W_s,\;,W_t)\in M_{\l_s-\l,\;,\l_t-\l}^{\lz*}.$ Conversely, if $(W_s,\;,W_t)\in M_{\l_s-\l,\;,\l_t-\l}^{\lz*}$ and we put 
$W_k:=W_{c_k}^2$ for a tripotent chain $c_s<\;<c_t$ of rank $\l_s-\l<\:<\l_t-\l$ in $W,$ then $c+c_s<\;<c+c_t$ is a tripotent chain of rank $\l_s<\;<\l_t$ in $E$ and $U_k:=E_{c+c_k}^2$ defines a flag $(U_s,\;U_t)\in M_{\l_s,\;,\l_t}^\lz.$ Moreover, if $c_k'\sim c_k$ are Peirce equivalent in $W$, then $c+c_k'\sim c+c_k$ are Peirce equivalent in $E.$  

The closed complex subgroup 
$$\h K^\lz:=\{\lk\in\h K: \lk\lz=\lz\}$$
acts transitively on the incidence space $M_{\l_s,\;,\l_t}^\lz$ and, similarly, the closed complex subgroup
$$\h K^{\lz*}:=\{\lk\in\h K: \lk^*\in\h K^\lz\}=\{\lk\in\h K: \lk^*\lz=\lz\}$$
acts transitively on the polar space $M_{\l_s-\l,\;,\l_t-\l}^{\lz*}$ since $h(z\b\lz^*)h^{-1}=(hz)\b(h^{-*}\lz)^*$ for all 
$h\in\h K.$ Therefore
$$M_{\l_s-\l,\;,\l_t-\l}^{\lz*}=\h K^{\lz*}/(\h K^{\lz*})^{W_{c_s},\;,W_{c_t}}$$
where
$$(\h K^{\lz*})^{W_{c_s},\;,W_{c_t}}=\{\lg\in\h K^{\lz*}:\ \lg W_{c_j}=W_{c_j}\ \forall\ s\le j\le t\}$$
and $W_{c_s}\ic\;\ic W_{c_t}\ic W$ is the Peirce 2-flag for a given chain of tripotents $c_s<\;<c_t$ in $W$ of type 
$\l_s-\l<\;<\l_t-\l.$ We may choose $c_j:=e_{\l+1}\;+e_{\l_j}.$ The restriction homomorphism
$$\h K^{\lz*}\to\h K_W,\ \lk\qi\lk|_W$$ 
defines a biholomorphic map
\be{30}M_{\l_s-\l,\;,\l_t-\l}^{\lz*}=\h K^{\lz*}/(\h K^{\lz*})^{W_{c_s}\;,W_{c_t}}\xrightarrow[\ap]{\lr}
\h K_W/\h K_W^{W_{c_s}\;,W_{c_t}}=M_{\l_s-\l,\;,\l_t-\l}^W\ee
where 
$$\h K_W^{W_{c_s}\;,W_{c_t}}=\{\lg\in\h K_W:\ \lg W_{c_j}=W_{c_j}\ \forall\ s\le j\le t\}$$
and $M_{\l_s-\l,\;,\l_t-\l}^W$ denotes the Peirce flag manifold relative to $W.$ Now consider the truncated partition $\ll^*$ of length $r-\l.$ Then the conical function relative to $W$ is
$$N_W^{\ll^*}=\P_{j=s}^t\t N_{c_j}^{n_j-n_{j+1}}$$
and there is a holomorphic character
$$\lc_W^{\ll^*}(\lg):=\P_{j=s}^t\t N_{c_j}(\lg c_j)^{n_j-n_{j+1}}$$
on $\h K_W^{W_{c_s},\;,W_{c_t}}$ such that
$$N_W^{\ll^*}\oc\lg^*=\o{\lc_W^{\ll^*}(\lg)}\ N_W^{\ll^*}$$
for all $\lg\in\h K_W^{W_{c_s},\;,W_{c_t}}.$ Let $\LL_W^{\ll^*}$ denote the associated holomorphic line bundle on 
$M_{\l_s-\l,\;,\l_t-\l}^W$ defined as in \er{28}. A holomorphic section $\ls:M_{\l_s-\l,\;,\l_t-\l}^W\to\LL_W^{\ll^*}$ is characterized by its homogeneous lift $\t\ls:\h K_W\to\Cl$ satisfying
$$\t\ls(\lk\lg)=\lc_W^{\ll^*}(\lg)\ \t\ls(\lk)$$
for all $\lk\in\h K_W$ and $\lg\in\h K_W^{W_{c_s}\;,W_{c_t}}.$ Via the isomorphism \er{30} we obtain a holomorphic pull-back line bundle $\LL_\lz^{\ll^*}=\lr^*\LL_W^{\ll^*}$ such that holomorphic sections 
$\ls:M_{\l_s-\l,\;,\l_t-\l}^{\lz*}\to\LL_\lz^{\ll^*}$ are characterized by the homogeneous lift $\t\ls:\h K^{\lz*}\to\Cl$ satisfying
$$\t\ls(\lk\lg)=\lc_W^{\ll^*}(\lg|_W)\ \t\ls(\lk)=\P_{j=s}^t\t N_{c_j}(\lg c_j)^{n_j-n_{j+1}}\ \t\ls(\lk)$$
for all $\lk\in\h K^{\lz*}$ and $\lg\in(\h K^{\lz*})^{W_{c_s}\;,W_{c_t}}.$ All of this holds in particular when $\lz=c$ is a tripotent of rank $\l.$

\begin{theorem} Write $\ll\in\Nl_+^r$ in the form \er{24}. Let $c$ be a tripotent of rank $\l$ with Peirce 0-space $W,$ such that 
$\l_{s-1}\le\l<\l_s.$ Consider the truncated partition
$$\ll^*=(\ll_{\l+1},\;,\ll_r)=(n_s^{(\l_s-\l)},n_{s+1}^{(\l_{s+1}-\l_s)},\;,n_t^{(\l_t-\l_{t-1})}).$$
Then there is an isomorphism
$$J_c^\ll\ap\PL_W^{\ll^*}\xrightarrow[\ap]{\lT_c}\lG(M_{\l_s,\;,\l_t}^{c*},\LL_c^{\ll^*})$$
which is defined as follows: Any $\lf\in\PL_W^{\ll^*}$ defines a holomorphic section 
$\ls^\lf:M_{\l_s,\;,\l_t}^{c*}\to\LL_c^{\ll^*},$ whose homogeneous lift $\t\ls^\lf:\h K^{c*}\to\Cl$ is given by the Fischer-Fock inner product
$$\t\ls^\lf(\lk)=(N_W^{\ll^*}|\lf\oc\lk|_W)$$
for all $\lk\in\h K^{c*}.$ 
\end{theorem}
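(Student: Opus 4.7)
The plan is to reduce the statement to Theorem~\ref{j} applied to the irreducible $J^*$-subtriple $W=E^c$ of rank $r-\l$, transported via the biholomorphism $\lr$ of \er{30}. Three ingredients are already in place: the isomorphism $J_c^\ll\xrightarrow[\ap]{\lp_c^{\ll^*}}\PL_W^{\ll^*}$ of \er{19}; the identification $\LL_c^{\ll^*}=\lr^*\LL_W^{\ll^*}$ built into the construction preceding the theorem; and Theorem~\ref{j} itself applied to the pair $(W,\ll^*)$. Observe that the step positions of $\ll^*$ viewed as a partition of length $r-\l$ are precisely $\l_s-\l<\;<\l_t-\l$ with heights $n_s>\;>n_t$, so the relevant Peirce flag manifold is $M_{\l_s-\l,\;,\l_t-\l}^W$, which matches the target of $\lr$.

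First I invoke Theorem~\ref{j} internally in $W$ to obtain a $\h K_W$-equivariant isomorphism
$$\PL_W^{\ll^*}\xrightarrow[\ap]{\lT_W}\lG(M_{\l_s-\l,\;,\l_t-\l}^W,\LL_W^{\ll^*}),$$
sending $\lf$ to the section $\ls_W^\lf$ with homogeneous lift $\t\ls_W^\lf(h)=(N_W^{\ll^*}|\lf\oc h)$ for $h\in\h K_W$. Next I pull back along $\lr$: since $\LL_c^{\ll^*}$ is by definition the pullback bundle $\lr^*\LL_W^{\ll^*}$, pullback of sections is a bijection $\lG(M_{\l_s-\l,\;,\l_t-\l}^W,\LL_W^{\ll^*})\xrightarrow[\ap]{\lr^*}\lG(M_{\l_s-\l,\;,\l_t-\l}^{c*},\LL_c^{\ll^*})$, and the homogeneous lift of $\ls^\lf:=\lr^*\ls_W^\lf$ is obtained from $\t\ls_W^\lf$ by precomposition with the restriction homomorphism $\lk\mapsto\lk|_W$. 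This gives precisely the claimed formula $\t\ls^\lf(\lk)=(N_W^{\ll^*}|\lf\oc\lk|_W)$. Composing with \er{19} yields the map $\lT_c$.

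The verifications reuse the skeleton of the proof of Theorem~\ref{j}. The transformation law $\t\ls^\lf(\lk\lg)=\lc_W^{\ll^*}(\lg|_W)\,\t\ls^\lf(\lk)$ for $\lg\in(\h K^{c*})^{W_{c_s},\;,W_{c_t}}$ follows from the semi-invariance $N_W^{\ll^*}\oc(\lg|_W)^*=\o{\lc_W^{\ll^*}(\lg|_W)}\,N_W^{\ll^*}$ combined with the adjoint identity $(p\oc\la^*|q)=(p|q\oc\la)$ for the Fischer-Fock product. Injectivity of $\lT_c$ is immediate because $\PL_W^{\ll^*}$ is linearly spanned by the translates $N_W^{\ll^*}\oc h^*$ as $h$ ranges over $\h K_W$, so the pairing $\lf\mapsto(N_W^{\ll^*}|\lf\oc h)$ already separates points of $\PL_W^{\ll^*}$. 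Surjectivity rests on Borel-Weil-Bott, which exhibits $\lG(M_{\l_s-\l,\;,\l_t-\l}^W,\LL_W^{\ll^*})$ as a $\h K_W$-irreducible module whose dimension coincides with $\dim\PL_W^{\ll^*}$.

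The main obstacle is bookkeeping rather than substance: one must ensure that the diagram of groups, characters, and line bundles commutes cleanly under restriction. Concretely, this amounts to checking that the restriction homomorphism $\h K^{c*}\to\h K_W$ is surjective modulo the relevant isotropy (so that $\lr^*$ really is a bijection on global sections, not merely an injection), and that the pullback of the character $\lc_W^{\ll^*}$ under $\lg\mapsto\lg|_W$ agrees with the character of $(\h K^{c*})^{W_{c_s},\;,W_{c_t}}$ defining $\LL_c^{\ll^*}$. Both facts are implicit in the biholomorphism \er{30} and in how $\LL_c^{\ll^*}$ was introduced, but they are the points where care with Peirce decompositions and adjoints is required.
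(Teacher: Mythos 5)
Your proposal is correct and follows essentially the same route as the paper's (much terser) proof: apply Theorem~\ref{j} to the subtriple $W$ with the truncated partition $\ll^*$, transport sections through the biholomorphism \er{30} using $\LL_c^{\ll^*}=\lr^*\LL_W^{\ll^*}$, and identify $J_c^\ll\ap\PL_W^{\ll^*}$ via \er{19}. Your additional checks (the transformation law under $(\h K^{c*})^{W_{c_s},\;,W_{c_t}}$ and the bijectivity of $\lr^*$ on sections) are exactly the details the paper leaves implicit in the phrase ``passing to \dots via the isomorphism \er{30}.''
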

\begin{proof} Applying Theorem \ref{j} to $W$ we have
$$\PL_W^{\ll^*}\xrightarrow[\ap]{\lT_W}\lG(M_{\l_s-\l,\;,\l_t-\l}^W,\LL_W^{\ll^*})$$
via the map $\lf\mapsto\ls_W^\lf$ with homogeneous lift
$$\t\ls_W^\lf(\lk)=(N_W^{\ll^*}|\lf\oc\lk)$$
for all $\lk\in\h K_W.$ Passing to $\lG(M_{\l_s,\;,\l_t}^{c*},\LL_c^{\ll^*})$ via the isomorphism \er{30}, the assertion follows.
\end{proof}

We finally describe a similar isomorphism for non-tripotent points $\lz\in\c E_\l.$ Write $\lz=hc$ for some $h\in\h K.$ Then
$h^{-*}\h K^{c*}h^*=\h K^{\lz*}$ and 
$$h^{-*}(\h K^{c*})^{W_{c_s},\;,W_{c_t}}h^*=(\h K^{\lz*})^{h^{-*}W_{c_s},\;,h^{-*}W_{c_t}}$$ 
for the Peirce 2-flag $h^{-*}W_{c_s}\ic\;\ic h^{-*}W_{c_t}\ic h^{-*}W\ic E.$ Thus we obtain a commuting diagram
$$\xymatrix{\h K^{c*}/(\h K^{c*})^{W_{c_s}\;,W_{c_t}}\ar[r]\ar[d]_{h^{-*}\.h^*}&M_{\l_s-\l,\;,\l_t-\l}^{c*}\ar[d]^{h^{-*}}
\\\h K^{\lz*}/(\h K^{\lz*})^{h^{-*}W_{c_s}\;,h^{-*}W_{c_t}}\ar[r]&M_{\l_s-\l,\;,\l_t-\l}^{\lz*}}.$$
Now the isomorphism
$$J_\lz^\ll\xrightarrow[\ap]{\lT_\lz}\lG(\h K^{\lz*}/(\h K^{\lz*})^{h^{-*}W_{c_s}\;,h^{-*}W_{c_t}},\LL_\lz^{\ll^*}).$$
is defined via the commuting diagram
$$\xymatrix{J_c^\ll\ar[rrr]_\ap^{\lT_c}&&&\lG(\h K^{c*}/(\h K^{c*})^{W_{c_s}\;,W_{c_t}},\LL_c^{\ll^*})\\
J_\lz^\ll\ar[rrr]_\ap^{\lT_\lz}\ar[u]_{\ap}^{\oc h}&&&\lG(\h K^{\lz*}/(\h K^{\lz*})^{h^{-*}W_{c_s}\;,h^{-*}W_{c_t}},\LL_\lz^{\ll^*})\ar[u]_{h^*\.h^{-*}}^\ap}.$$
This demonstrates that the bundle $J_\lz^\ll$ depends in an anti-holomorphic way on $\lz.$

{\bf Data Availability Statement}: The author confirms that the data supporting the findings of this study are available within the article.

\end{document}